\newtheorem{theorem}{Theorem}[section]
\newtheorem{lemma}[theorem]{Lemma}
\newtheorem{corollary}[theorem]{Corollary}
\theoremstyle{definition}
\newtheorem{definition}[theorem]{Definition}
\newtheorem{remark}[theorem]{Remark}
\newcommand{\Z}{\mathbb{Z}}
\newcommand{\N}{\mathbb{N}}
\newcommand{\R}{\mathbb{R}}
\newcommand{\T}{\mathbb{T}}
\renewcommand{\tilde}{\widetilde}
\renewcommand{\hat}{\widehat}
\newcommand{\Ind}{\textup{Ind}}
\renewcommand{\tilde}{\widetilde}
\def\ip<#1,#2>{\left\langle #1,#2 \right\rangle}
\newcommand{\vertiii}[1]{{\left\vert\kern-0.25ex\left\vert\kern-0.25ex\left\vert #1 
		\right\vert\kern-0.25ex\right\vert\kern-0.25ex\right\vert}}
\title{Constructing Well-bounded Operators not of type (B) on a Class of Inductive Limits}
\author{Alan Stoneham}
\date{}
\begin{document}

	\maketitle
	
	\begin{abstract}
		Well-bounded operators are linear operators on a Banach space $X$ that have an $AC[a,b]$ functional calculus for some interval $[a,b]$. A well-bounded operator is of type (B) if it can be written as an integral against a spectral family of projections, and this is always the case when $X$ is reflexive. There are many examples of well-bounded operators on non-reflexive spaces that are not of type (B), and it is open whether there is a non-reflexive Banach space upon which every well-bounded operator is of type (B). It was suggested in \cite{comp-wb} that the spaces constructed in \cite{Pisier} could provide an example of such a space. In this paper, it will be shown that on a class of Banach spaces containing the spaces from \cite{Pisier}, there is always a well-bounded operator not of type (B).
	\end{abstract}

\numberwithin{equation}{section}
	
	\section{Introduction}
	
	The class of well-bounded operators was introduced by Smart to generalise the spectral theory for self-adjoint operators to a theory which could accommodate operators on Banach spaces whose spectral expansions converged conditionally. Well-bounded operators are bounded linear operators that possess a functional calculus for the absolutely continuous functions on some compact interval $[a,b]$. It was shown by Smart and Ringrose \cite{Smart,Ringrose1} that a well-bounded operator on a reflexive Banach space $X$ could always be written as an integral with respect to a unique `spectral family of projections' on $X$. However, on non-reflexive Banach spaces, there are simple examples of well-bounded operators which could not be written as such an integral, such as $T$ acting on $C[0,1]$ defined by $Tx(t)=tx(t)$. Ringrose \cite{Ringrose2} later extended this theory to the non-reflexive setting by considering projections on the dual space $X^*$ rather than $X$, and here the family of projections may no longer be unique. 
	
	Berkson and Dowson \cite{BD} considered classes of well-bounded operators whose projections possessed certain properties. They said that a well-bounded operator is of type (B) if it possessed a spectral family of projections, and they with Spain \cite{Spain-wb} classified the type (B) well-bounded operators as those whose functional calculus is weakly compact. It is still an open problem whether there is a non-reflexive Banach space where all well-bounded operators are of type (B). Current results, which will be discussed in Section~\ref{sec-background}, show that such a Banach space would have to be rather exotic.
	
	Much like the spectral theorem for compact self-adjoint operators, it is known from \cite{CDnr1} that a compact well-bounded operator $T$ can always be expressed as
	\begin{align}\label{eqn-comp}
		T=\sum_{j=1}^\infty \lambda_j P_j
	\end{align}
	for a sequence of real numbers $\{\lambda_j\}_{j=1}^\infty$ converging monotonely in absolute value to 0 and $\{P_j\}_{j=1}^\infty$ a sequence of uniformly bounded, mutually disjoint projections of finite rank. This identification provides a simple way to construct well-bounded operators with various properties. An in-depth discussion of compact well-bounded operators was undertaken by Cheng and Doust \cite{comp-wb} where compact well-bounded operators of type (B) were classified. Moreover, they showed that from a compact well-bounded operator of infinite rank, one can find an increasing sequence of uniformly bounded finite rank projections. That is, a sequence of finite rank projections $\{Q_n\}_{n=1}^\infty$ such that $\sup\limits_n\|Q_n\|<\infty$, $Q_m\neq Q_n$ and $Q_mQ_n=Q_nQ_m = Q_{\min(m,n)}$ for all $m,n\in\Z^+$. 
	
	In \cite{Pisier}, Pisier showed that every Banach space $E$ of cotype 2 is isometric to a subspace of a Banach space $X_E$ also of cotype 2 satisfying, among other things, the following property:
	\begin{center}
		there is a constant $C_E>0$ such that if $P$ is a finite rank projection on $X_E$, then
	\begin{align}\label{pisier-property}
		\|P\| \geq C_E\sqrt{\text{rank}(P)}.
	\end{align}
	\end{center}
	It is well-known (for example, \cite{Pisier-book} Theorem 1.14) that given an $n$-dimensional subspace $Y$ of a Banach space $X$, there is a finite rank projection $P:X\to Y$ with $\|P\|\leq \sqrt{n}$, and so Pisier's spaces exhibit extreme behaviour concerning finite rank projections.
	As a consequence of \eqref{pisier-property}, there can be no increasing sequences of uniformly bounded finite rank projections. In particular, the only compact well-bounded operators are the ones of finite rank, which are all of type (B). Hence it was suggested in \cite{comp-wb} that it may be possible for all well-bounded operators on these spaces to be of type (B).
	
	The space $X_E$ is constructed by taking a Banach space $E=E_0$ and forming a particular sequence of Banach spaces $E_0,E_1,E_2,\dots$ such that each space is isometric to a subspace of the next, the construction of which is detailed in Section~\ref{pisier-construct}. The inductive limit of this sequence of Banach spaces is $X_E$, upon which \eqref{pisier-property} is obtained when $E$ is of cotype 2. In Section~\ref{sec-results}, it will be shown that for every Banach space $E$, there is a well-bounded operator on $X_E$ not of type (B).


	\section{Background}\label{sec-background}
	
	In this section, the necessary definitions and results for this article regarding well-bounded operators will be presented, followed by a detailing of the construction of the space $X_E$ from a Banach space $E$. Throughout the following, $X$ and $E$ will denote Banach spaces and $B(X,E)$ will denote the set of bounded linear operators from $X$ to $E$ with $B(X)=B(X,X)$. The continuous dual of $X$ will be denoted by $X^*$ and the adjoint of $T:X\to E$ denoted by $T^*$. A `projection' $P$ will refer to a bounded linear operator acting on a Banach space such that $P^2=P$. Lastly, SOT will refer to the strong operator topology.
	
	\subsection{Well-bounded Operators}
	
	An operator $T\in B(X)$ is \textbf{well-bounded} if there exists an interval $[a,b]$ and a $K>0$ such that
		\begin{align*}
			\|p(T)\| \leq K\left( |p(a)| + \int_a^b|p'(t)|\,dt \right)
		\end{align*}
		for all polynomials $p$. As the polynomials are dense in $AC[a,b]$, the expression $f(T)$ can be made sense of for $f\in AC[a,b]$, and the mapping $f\mapsto f(T)$ defines a continuous Banach algebra homomorphism. That is, the operator $T$ has a functional calculus for $AC[a,b]$. A well-bounded operator is \textbf{of type (B)} if it can be written as an integral against a spectral family of projections. 
	
	\begin{definition}\label{spectral family}
		A \textbf{spectral family of projections} is a projection-valued function $E:\R\to B(X)$ satisfying the following five conditions:

		\begin{enumerate}[label=(\roman*)]
			\item there exist $a,b\in\R$ such that $E(\lambda)=0$ for all $\lambda<a$ and $E(\lambda)=I$ for all $\lambda\geq b$;
			\item $\sup\limits_\lambda \|E(\lambda)\|<\infty$;
			\item $E(a)=0$ and $E(b)=I$;
			\item $E(\lambda)E(\mu) = E(\min(\lambda,\mu))$ for all $\lambda,\mu\in\R$;
			\item $E$ is right-continuous and has left limits in the SOT. That is, for all $\mu\in\R$ and $x\in X$, $\lim\limits_{\lambda\to\mu^+}E(\lambda)x=E(\mu)x$ and $\lim\limits_{\lambda\to\mu^-}E(\lambda)x$ exists.
		\end{enumerate}
	\end{definition}
	
	It was shown in \cite{BD} that a well-bounded operator $T$ acting on $X$ being of type (B) is equivalent to $f\mapsto f(T)x$ being a weakly compact mapping of $AC[a,b]$ into $X$ for each $x\in X$. Hence on reflexive spaces, all well-bounded operators are of type (B).

	One can integrate a function of bounded variation $g\in BV[a,b]$ against a spectral family $E$ via a Riemann-Stieltjes integral. For a partition $\mathcal{P} = \{t_j\}_{j=0}^n$ of $[a,b]$, partial sums of the form
	\begin{align*}
		g(a)E(a) + \sum_{j=1}^n g(t_j)\big( E(t_{j})-E(t_{j-1}) \big)
	\end{align*} 
	converge in the SOT with respect to refinement of $\mathcal{P}$, and the resulting operator is usually denoted in literature by
	\begin{align*}
		\int_{[a,b]}^\oplus g(\lambda)\, dE(\lambda).
	\end{align*}
	 The spectral theorem for well-bounded operators of type (B) (see Ch.~17 of \cite{Dowson}) states that there is a bijective correspondence between well-bounded operators $T$ of type (B)  and spectral families $E$ given by
	\begin{align*}
		T=\int_{[a,b]}^\oplus \lambda\, dE(\lambda).
	\end{align*}

One of the simplest ways to construct well-bounded operators is via increasing sequences of uniformly bounded projections.	 Moreover, it is simple to identify when these well-bounded operators are of type (B).
	
	\begin{definition}
		A sequence of projections $\{P_k\}_{k=1}^\infty$ on a Banach space is said to be \textbf{increasing} if $P_k P_\ell=P_\ell P_k=P_{\min(k,\ell)}$ and $P_k\neq P_\ell$ for all $k,\ell\in\Z^+$. 
	\end{definition}

	\begin{theorem}\textup{(\cite{CDnr1}, Theorem 3.1 and Proposition 4.1.)}\label{thm-proj-B}
		Suppose that $\{\lambda_n\}_{n=1}^\infty$ is a convergent sequence of increasing real numbers with limit $L$ and $\{P_n\}_{n=1}^\infty$ is an increasing sequence of uniformly bounded projections on a Banach space $X$. 
		 Then $T= I - \sum\limits_{k=1}^\infty (\lambda_{k+1}-\lambda_k)P_k$ defines a well-bounded operator on $X$. Moreover $T$ is of type (B) if and only if $\lim\limits_{n\to\infty} P_n$ exists in the SOT. 
	\end{theorem}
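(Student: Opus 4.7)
The plan is to split the statement into three parts: defining the operator $T$, establishing well-boundedness, and characterising type (B). Write $\mu_k := 1 - L + \lambda_k$, so that $\{\mu_k\}$ is strictly increasing in $[\mu_1, 1)$ with $\mu_k \to 1$ and $\mu_{k+1} - \mu_k = \lambda_{k+1} - \lambda_k$. Setting $M := \sup_k \|P_k\|$, the estimate $\sum_k (\lambda_{k+1} - \lambda_k)\|P_k x\| \le M(L - \lambda_1)\|x\|$ shows that the series defining $T$ converges absolutely, so $T \in B(X)$; and using $P_k P_\ell = P_{\min(k, \ell)}$ one sees that $T$ acts as the scalar $\mu_j$ on $\text{ran}(P_j - P_{j-1})$ (with $P_0 := 0$), which is what the candidate spectral family will reflect.

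For well-boundedness I would construct an $AC$ functional calculus by defining $\Phi : AC[\mu_1, 1] \to B(X)$ via
\[
\Phi(g) \;=\; g(1)\,I \;-\; \sum_{k=1}^\infty \big( g(\mu_{k+1}) - g(\mu_k) \big) P_k.
\]
The series converges absolutely in operator norm since $\sum_k |g(\mu_{k+1}) - g(\mu_k)| \le \int_{\mu_1}^1 |g'(t)|\,dt$, giving $\|\Phi(g)\| \le (1 + M)\bigl(|g(1)| + \int_{\mu_1}^1 |g'(t)|\,dt\bigr)$. Linearity is clear, and multiplicativity follows by expanding $\Phi(f)\Phi(g)$, using $P_k P_\ell = P_{\min(k,\ell)}$, and collecting the coefficient of each $P_m$ via the telescoping identity $\sum_{\ell \ge m}(g(\mu_{\ell+1}) - g(\mu_\ell)) = g(1) - g(\mu_m)$ (valid because $g$ is continuous at $1$). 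Since $\Phi(1) = I$ and $\Phi(t) = T$, one has $\Phi(p) = p(T)$ for every polynomial, which confirms well-boundedness on $[\mu_1, 1]$.

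For the equivalence, the $(\Leftarrow)$ direction is constructive: given $Q := \lim_n P_n$ in the SOT, $Q$ is a bounded projection with $P_k Q = Q P_k = P_k$. Define $E(\lambda) := 0$ for $\lambda < \mu_1$, $E(\lambda) := P_k$ for $\mu_k \le \lambda < \mu_{k+1}$, and $E(\lambda) := I$ for $\lambda \ge 1$; the axioms of Definition~\ref{spectral family} are straightforward, with $Q$ serving as the left SOT-limit $E(1^-)$, and Abel summation on the Riemann-Stieltjes partial sums yields $T = \int_{[\mu_1, 1]}^\oplus \lambda\, dE(\lambda)$. For $(\Rightarrow)$, assume $T$ is of type (B) with spectral family $E$, and consider the piecewise linear $g_n \in AC[\mu_1, 1]$ with $g_n \equiv 0$ on $[\mu_1, \mu_n]$ and $g_n \equiv 1$ on $[\mu_{n+1}, 1]$. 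Since polynomials are dense in $AC[\mu_1, 1]$ and both $\Phi$ and the $AC$ functional calculus of $T$ are continuous homomorphisms agreeing with $p \mapsto p(T)$ on polynomials, they coincide, so $g_n(T) = \Phi(g_n) = I - P_n$. Using the spectral theorem representation $g_n(T) = \int_{[\mu_1, 1]}^\oplus g_n\, dE$ together with integration by parts and the left SOT-continuity of $E$ at $1$, one shows this integral tends in the SOT to $I - E(1^-)$, so $\lim_n P_n = E(1^-)$ exists in the SOT.

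The main obstacle I foresee is the final convergence step in $(\Rightarrow)$: one must justify (via integration by parts for the Stieltjes spectral integral) that $\int_{[\mu_1, 1]}^\oplus g_n\, dE = I - (\mu_{n+1} - \mu_n)^{-1}\int_{\mu_n}^{\mu_{n+1}} E(\lambda^-)\, d\lambda$, then invoke condition (v) of Definition~\ref{spectral family} to drive this Bochner-style average to $E(1^-)$ in the SOT. The remaining pieces are largely algebraic bookkeeping once $\Phi$ has been constructed and identified with the $AC$ functional calculus of $T$.
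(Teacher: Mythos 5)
The paper imports this theorem from Cheng--Doust without reproducing a proof, so there is no internal argument to compare against; judged on its own, your proposal is correct and follows essentially the standard route of \cite{CDnr1}: construct the $AC$ functional calculus directly from the telescoping sums $\Phi(g)=g(1)I-\sum_k(g(\mu_{k+1})-g(\mu_k))P_k$, and then identify $\lim_n P_n$ with $E(1^-)$ using the ramp functions $g_n$ and integration by parts for the spectral integral. The only cosmetic point is that your well-boundedness estimate controls $|p(1)|$ rather than $|p(a)|$ at the left endpoint of the interval, which is harmless since $|p(1)|\leq|p(\mu_1)|+\int_{\mu_1}^{1}|p'(t)|\,dt$.
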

	
	It is currently an open problem whether or not there is a non-reflexive Banach space upon which every well-bounded operators is of type (B). Such a space must be rather unusual as most classical spaces are known to have a well-bounded operator not of type (B) due to the following results.

	\begin{theorem}\label{c0 theorem}\textup{(\cite{DdL}, Theorem 4.4)}
		Suppose that a Banach space $X$ contains a subspace isomorphic to $c_0$. Then there is a well-bounded operator on $X$ not of type (B).
	\end{theorem}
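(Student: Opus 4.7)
The plan is to apply Theorem~\ref{thm-proj-B}: it suffices to construct an increasing sequence $\{P_n\}$ of uniformly bounded projections on $X$ that fails to converge in the strong operator topology, because then for any strictly increasing convergent sequence $\{\lambda_n\}\subset\R$, the operator $T = I - \sum_{k=1}^\infty (\lambda_{k+1}-\lambda_k)P_k$ is a well-bounded operator on $X$ that is not of type (B).

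To produce the projections, I would first pick a closed subspace $Y\subsetneq X$ isomorphic to $c_0$. Such a $Y$ exists: by hypothesis $X$ contains some copy $Y_0$ of $c_0$, and $c_0$ itself contains a proper copy of $c_0$ (for instance, the closed span of the even-indexed unit vectors), so pulling this back yields $Y\subsetneq Y_0\subseteq X$ with $Y\cong c_0$. Let $\{f_n\}\subset Y$ be the image of the standard $c_0$-basis and $\{f_n^*\}\subset Y^*$ the uniformly bounded biorthogonal functionals. Take norm-preserving Hahn-Banach extensions $\tilde{f}_n^*\in X^*$ with $M:=\sup_n\|\tilde{f}_n^*\|<\infty$. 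The naive candidate $P_n x = \sum_{k=1}^n\tilde{f}_k^*(x)f_k$ is already an increasing sequence of uniformly bounded projections onto $[f_1,\ldots,f_n]$, but could well converge in SOT (for example if the extensions happen to be weak-$*$ null).

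To prevent SOT convergence, I would exploit the non-uniqueness of the Hahn-Banach extensions. Since $Y\subsetneq X$, there is a nonzero $\psi_0\in Y^\perp\subseteq X^*$; rescale so that $\|\psi_0\|>2M$ and pick $x_0\in X$ of norm $1$ with $\psi_0(x_0)>2M$. Replace $\tilde{f}_n^*$ by $\phi_n := \tilde{f}_n^* + \psi_0$; because $\psi_0$ vanishes on $Y$, each $\phi_n$ is still an extension of $f_n^*$, the biorthogonality $\phi_k(f_j)=\delta_{kj}$ is preserved, and $\sup_n\|\phi_n\|\leq M+\|\psi_0\|$. Defining $P_n x := \sum_{k=1}^n\phi_k(x)f_k$ then yields an increasing sequence of uniformly bounded projections on $X$, using the $c_0$-basis property of $\{f_k\}$ to bound $\|P_n x\|$ by a constant multiple of $\sup_k|\phi_k(x)|$.

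Finally, at the chosen test vector,
$$|\phi_k(x_0)| \geq \psi_0(x_0) - |\tilde{f}_k^*(x_0)| \geq 2M - M = M$$
for every $k$, so $\phi_k(x_0)\not\to 0$. Since $\{f_k\}$ is equivalent to the unit-vector basis of $c_0$, the partial sums $P_n x_0 = \sum_{k=1}^n\phi_k(x_0)f_k$ cannot converge in $X$; hence $\{P_n\}$ does not converge in SOT, and Theorem~\ref{thm-proj-B} delivers the desired well-bounded operator not of type (B). The main conceptual hurdle is recognizing that one must exploit the non-uniqueness of the Hahn-Banach extensions via the perturbation $\psi_0\in Y^\perp$, which in turn forces the auxiliary step of passing to a proper copy of $c_0$ inside $X$ in order to ensure $Y^\perp\neq\{0\}$.
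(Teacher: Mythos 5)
Your argument is correct, and it is worth noting that the paper does not actually prove Theorem~\ref{c0 theorem} at all: it is imported verbatim from \cite{DdL}, so there is no internal proof to compare against. What you have produced is a self-contained derivation using the paper's own machinery, namely Theorem~\ref{thm-proj-B}: an increasing, uniformly bounded sequence of projections that fails to converge in the SOT yields a well-bounded operator not of type (B). This is exactly the strategy the paper uses for its main result on $X_E$, so your proof fits the paper's framework more tightly than the cited external argument (which goes through the characterisation of type (B) via weak compactness of the $AC$ functional calculus). The key steps all check out: passing to a proper copy $Y\subsetneq X$ of $c_0$ guarantees $Y^\perp\neq\{0\}$; the perturbed functionals $\phi_k=\tilde{f}_k^*+\psi_0$ remain biorthogonal to $\{f_j\}$ because $\psi_0$ vanishes on $Y$, so the $P_n$ are genuinely commuting projections with $P_nP_m=P_{\min(n,m)}$ and distinct ranges; uniform boundedness follows from the upper $c_0$-estimate $\bigl\|\sum_{k\le n}a_kf_k\bigr\|\leq C\sup_k|a_k|$; and the lower estimate $\|P_{n+1}x_0-P_nx_0\|=|\phi_{n+1}(x_0)|\,\|f_{n+1}\|\geq M\inf_k\|f_k\|>0$ shows $\{P_nx_0\}$ is not Cauchy. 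Your observation that the perturbation by $\psi_0$ is genuinely necessary is also right --- with the unperturbed extensions the sequence can converge in the SOT (e.g.\ when the copy of $c_0$ is complemented) --- and this is the one conceptual point where a naive attempt would stall. The only cosmetic quibble is that $\psi_0(x_0)$ should be read as $|\psi_0(x_0)|$ (or one should normalise the phase) in the complex case; nothing of substance changes.
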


	\begin{theorem}\label{non-refl-theorem}
		\textup{(\cite{CDnr1}, Theorem 4.2)} If a Banach space $X$ contains a complemented non-reflexive subspace with a Schauder basis, then there is a well-bounded operator on $X$ not of type (B).
	\end{theorem}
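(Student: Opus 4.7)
The overall strategy is to apply Theorem~\ref{thm-proj-B}: if one can construct on $X$ an increasing sequence $\{P_n\}_{n=1}^\infty$ of uniformly bounded projections whose strong operator topology limit does not exist, then pairing it with any strictly increasing real sequence $\lambda_n \nearrow L$ produces, by Theorem~\ref{thm-proj-B}, a well-bounded operator on $X$ which is not of type~(B).

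My first step would be to reduce the construction to $Y$ itself. Writing $\iota : Y \hookrightarrow X$ for the inclusion and $Q : X \to Y$ for the given bounded projection (so that $Q\iota = I_Y$), I would lift any uniformly bounded increasing sequence $\{q_n\}$ of projections on $Y$ to $X$ by setting
\[
P_n \;:=\; \iota\, q_n\, Q.
\]
A direct computation using $Q\iota = I_Y$ shows that $P_n^2 = P_n$, that $P_n P_m = \iota\, q_n q_m\, Q = P_{\min(n,m)}$, and that $\|P_n\| \leq \|q_n\|\|Q\|$. Under the identification of $Y$ with a subspace of $X$ we have $P_n|_Y = q_n$, so SOT-convergence of $\{P_n\}$ on $X$ would force SOT-convergence of $\{q_n\}$ on $Y$. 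Hence it suffices to construct such $\{q_n\}$ inside $Y$.

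The construction in $Y$ exploits non-reflexivity via James' characterisation: the Schauder basis $(e_n)$ must fail either to be boundedly complete or to be shrinking. If $(e_n)$ is not boundedly complete, then one can find scalars $(a_i)$ with bounded but non-convergent partial sums $s_n = \sum_{i \leq n} a_i e_i$, and a Bessaga--Pe\l czy\'nski selection applied to the block differences $s_{n_{k+1}} - s_{n_k}$ yields a basic sequence equivalent to the canonical basis of $c_0$. Hence $X \supseteq Y$ contains $c_0$ and Theorem~\ref{c0 theorem} applies. If $(e_n)$ is not shrinking---the essential case, since $Y$ need not contain $c_0$ (take $Y = \ell_1$ for example)---one chooses a functional $f \in Y^*$ witnessing the failure, so that $\|f|_{[e_{n+1}, e_{n+2}, \ldots]}\| \geq \delta$ for all $n$ and some $\delta > 0$, together with norm-one vectors selected from these tail subspaces on which $f$ achieves at least $\delta/2$. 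Using this additional data one modifies the basis projections $p_n$ into a new family $\{q_n\}$ that remains increasing and uniformly bounded but whose ranges incorporate the non-vanishing tail of $f$ in such a way that no SOT-limit can exist.

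The main obstacle lies in this non-shrinking case. The obvious candidates---the basis projections $\{p_n\}$ themselves---always converge in SOT to $I_Y$ regardless of whether $Y$ is reflexive, so one cannot use them directly; instead one must build $\{q_n\}$ out of additional data reflecting the failure of shrinking, and verify that the resulting family is genuinely an increasing sequence of uniformly bounded projections lacking an SOT-limit. Once $\{q_n\}$ has been produced on $Y$, lifting to $\{P_n\}$ on $X$ via the formula above and invoking Theorem~\ref{thm-proj-B} completes the proof.
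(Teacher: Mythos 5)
This statement is quoted from \cite{CDnr1} and the paper supplies no proof of it, so there is nothing internal to compare against; your proposal has to stand on its own, and as written it has two genuine gaps. Your reduction from $X$ to the complemented subspace $Y$ via $P_n=\iota q_n Q$ is correct and unproblematic, and the overall strategy (an increasing, uniformly bounded, SOT-divergent sequence of projections fed into Theorem~\ref{thm-proj-B}) is a reasonable one. The problems are in the two branches of the James dichotomy.

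First, the non-boundedly-complete branch is simply false as argued. Bounded, non-convergent partial sums $s_n=\sum_{i\le n}a_ie_i$ do \emph{not} produce a block sequence equivalent to the $c_0$ basis: the Bessaga--Pe\l czy\'nski $c_0$-theorem needs a weakly unconditionally Cauchy series that fails to converge unconditionally, and bounded partial sums are far weaker than weak unconditional Cauchyness. James' space $J$ is a concrete counterexample: it is non-reflexive, its canonical basis is shrinking but not boundedly complete (so $\sum e_i$ has bounded, non-convergent partial sums), yet $J$ contains no copy of $c_0$, so Theorem~\ref{c0 theorem} cannot be invoked. Second, the non-shrinking branch --- which you yourself identify as the essential case --- is not actually proved: you state that one should ``modify the basis projections using the non-vanishing tail of $f$'' so that the resulting increasing family has no SOT limit, but no such family is exhibited and no verification is attempted. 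This is precisely the hard part of the theorem (note, e.g., that for $Y=\ell_1$ the obvious perturbations of the coordinate projections by the functional $\mathbf{1}\in\ell_\infty$ still converge in the SOT, so the construction is genuinely delicate). As it stands, then, one branch of your dichotomy rests on a false implication and the other on an unconstructed object; a repair would at minimum need Zippin's theorem (a non-reflexive space with a basis admits a non-shrinking basis) to collapse to the second case, together with an explicit construction there --- or a different mechanism altogether for producing a non-weakly-compact $AC$ functional calculus, which is closer to what \cite{CDnr1} actually does.
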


	Due to the property stated in \eqref{pisier-property}, the spaces constructed by Pisier cannot have a Schauder basis and hence the second of the above theorems does not apply directly. Moreover, in the case of $E$ being of cotype 2, the corresponding space $X_E$ is also of cotype 2 and hence cannot possibly contain a subspace isomorphic to $c_0$. However, it will be shown that $X_E$ always contains an increasing sequence of uniformly bounded projections of infinite rank that do not converge in the SOT, and hence there is a well-bounded operator on $X_E$ not of type (B) by Theorem~\ref{thm-proj-B}. To construct these projections, it will be necessary to understand how $X_E$ is constructed.

	\subsection{The Construction of $X_E$}\label{pisier-construct}

	The first ingredient for the construction of $X_E$ is inductive limits of Banach spaces. Given a sequence of Banach spaces $E_0,E_1,E_2,\dots$ and linear isometries $j_n:E_n\to E_{n+1}$ for each $n\in\N$, one may construct the (Banach) \textbf{inductive limit} $\Ind(E_n,j_n)$. Consider the subspace of $\prod_{n=0}^{\infty} E_n$ consisting of sequences $(x_n)_{n=0}^\infty$ such that $j_nx_n = x_{n+1}$ for all $n$ large enough, which will be denoted by $\mathcal{X}$. (The subspace $\mathcal{X}$ may be thought of as the set of sequences that are eventually `constant'.) A semi-norm $\rho$ may be defined on $\mathcal{X}$ by $\rho(x) = \lim\limits_{n\to\infty} \|x_n\|$ and the inductive limit is then the completion of $\mathcal{X}/\ker\rho$. Note that one may think of $\mathcal{X}/\ker\rho$ as the collection of cosets of sequences which are eventually the same.
	
	For each $n\in\N$, there is a linear isometry $i_n$ from $E_n$ to the inductive limit given by mapping $x\in E_n$ to
	\begin{align}\label{ind-lim-i_n}
		(0,\dots,0,x,j_nx,j_{n+1}j_nx,\dots)+\ker\rho,
	\end{align}
	where there are $n$ zeros before $x$ in the sequence. As these maps satisfy $i_{n+1}j_n=i_n$ for each $n$, $E_n$ is isometric to the subspace $i_n(E_n)$ of the inductive limit. Hence $\mathcal{X}/\ker\rho$ may be identified with $\bigcup_{n=0}^\infty i_n(E_n)$. (For this reason, it is often easiest to think of the inductive limit as being  $\overline{\bigcup_{n=0}^\infty E_n}$.)
	

	The next ingredient is to construct the particular sequence of Banach spaces from some base space $E=E_0$. The construction involves iterating the following idea which Pisier credits to Kisljakov \cite{Kisljakov}. Let $E$ and $B$ be Banach spaces, $S$ a closed subspace of $B$, and $u:S\to E$ a bounded linear operator with $\|u\|\leq 1$. Then there exists a Banach space $\tilde{E}$, a linear isometry $j:E\to\tilde{E}$ and a bounded linear operator $\tilde{u}:B\to \tilde{E}$ with $\|\tilde{u}\| \leq 1$ and $\tilde{u}|_{S} = ju$.
	
	\begin{figure}[h]
		\centering
		
		$	\begin{tikzcd}[row sep=huge, column sep=huge,every label/.append style = {font=\small}]
			B \arrow[r,"\tilde{u}"] & \tilde{E} &  \arrow[l, "\pi"'] B\oplus E\\
			S \arrow[ u, hook] \arrow[r, "u"] & E \arrow[u,hook, "j"] \arrow[ur,hook]
		\end{tikzcd}$
		\caption{A commutative diagram illustrating the construction of $\tilde{E}$ from $E$.}\label{diagram}
	\end{figure}
	
	The space $\tilde{E}$ is constructed by taking the quotient of $B\oplus E$, equipped with the norm $\|(x,e)\|=\|x\|+\|e\|$, by the subspace $\{(s,-us)\}_{s\in S}$. If $\pi$ denotes the quotient map from $B\oplus E$ to $\tilde{E}$, then $\tilde{u}$ is defined by $\tilde{u}x = \pi( x,0)$ and the linear isometry $j$ of $E$ into $\tilde{E}$ is given by $j e=\pi(0,e)$. Indeed, since $\|us\|\leq \|s\|$ for all $s\in S$,
	\begin{align*}
	   \|je\| \leq \|e\| \leq \inf_{s\in S}( \|e\| + \|s\|-\|us\| ) \leq \inf_{s\in S} (\|s\|+\|e-us\|) = \|je\|,
	\end{align*} 
	and so $\|je\| = \|e\|$.
	
	\begin{remark}\label{rem-quotient isometry}
		A property of the construction is that $B/S$ is isometric to $\tilde{E}/jE$ under the mapping $x+S\mapsto \pi(x,0)+jE$. Firstly, this map is well-defined since for all $s\in S$, $\pi(s,0) = \pi(0,us)$ and
		 \begin{align*}
			\pi(x+s,0) + jE = \pi(x,0)+jus +jE = \pi(x,0) + jE.
		\end{align*}
		Secondly, it is surjective since $ \pi(x,e)+jE= \pi(x,0)+jE$ for all $e\in E$. Lastly, it is isometric since
		\begin{align*}
			\|\pi(x,0)+jE\| =  \inf_{e\in E} \|\pi(x,e)\| = \inf_{e\in E} \inf_{s\in S} \| (x+s,e-us)\| = \inf_{s\in S} \|x+s\| = \|x+S\|.
		\end{align*}
	\end{remark}
	
%

	The final ingredients for Pisier's construction involve specific choices of $B$, $S$ and $u$. These will be built from the following three Banach spaces and choices of closed subspaces. Let $\T$ denote the complex unit circle, $D=\{-1,1\}^\N$, and $\mu$ be the normalised Haar measure on $D$ induced by the group structure given by entrywise multiplication. Set
	\begin{itemize}
		\item $B_1=L^1(D,\mu)$ and $S_1=\overline{\text{span}}\{\varepsilon_k\}_{k=0}^\infty$ where $\varepsilon_k$ is the $k$th coordinate functional on $D$ defined by $\varepsilon_k((d_i)_{i=0}^\infty) = d_k$;
		\item $B_2 = L^1(\T)$ equipped with the normalised Lebesgue measure and $S_2=H^1(\T)$, where $H^1(\T)$ is the subspace of $L^1(\T)$ consisting of functions whose negative Fourier coefficients are zero;
		\item $B_3 = L^1(\T)/H^1(\T)$ and $S_3 = \overline{\text{span}}\{\xi_k\}_{k=1}^\infty$, where $\xi_k$ is the coset of $z\mapsto z^{-3^k}$.
	\end{itemize}
	Now, for a collection of Banach spaces $(X_i)_{i\in I}$, define the notation
\begin{align*}
	\ell^1(X_i)_{i\in I} = \left\{ (x_i)_{i\in I} \in \prod_{i\in I} X_i \,:\, \| (x_i)_{i\in I} \| = \sum_{i\in I} \|x_i\| < \infty \right\}.
\end{align*}
 Let $F(S_i,E)$ denote the set of non-zero finite rank operators from $S_i$ to $E$ and let $A=\bigcup_{i=1}^3 F(S_i,E)$. For each $\alpha\in A$, set $B_\alpha=B_i$ and $S_\alpha=S_i$ whenever $\alpha\in F(S_i,E)$. The spaces $B$ and $S$ are defined as $B=\ell^1 (B_\alpha)_{\alpha\in A}$, $S=\ell^1 (S_\alpha)_{\alpha\in A}$, and the operator $u:S\to E$ defined by
	\begin{align*}
		u((s_\alpha)_{\alpha\in A} ) = \sum_{\alpha\in A} \frac{\alpha s_\alpha}{ \|\alpha\|},
	\end{align*}
	which satisfies $\|u\|\leq 1$.
	
	By iterating Kisljakov's construction from Figure~\ref{diagram}, one obtains a sequence of Banach spaces $E_0,E_1,E_2,\dots$ with $E_0=E$, $E_{n+1}=\tilde{E_n}$ and linear isometries $j_n:E_n\to E_{n+1}$ for all $n\in\N$. The space $X_E$ is the inductive limit $\Ind(E_n,j_n)$.
	
	Obviously $X_E$ is non-reflexive if $E$ is non-reflexive, and comments made by Pisier in chapter 10e of his monograph \cite{Pisier-book} imply that whenever $X_E$ contains a subspace $M$ that is isomorphic to $\ell^2$ (so that $X_E^*$ admits $\ell^2$ as a quotient via $X_E^*/M^\perp \cong M^*\cong\ell^2$), the properties of $X_E$ and $X_E^*$ ensure that $X_E^*$ contains a subspace isomorphic to $\ell^1$, and hence cannot be reflexive. 
	To conclude this section, it will be demonstrated that $X_E$ is always non-reflexive.
	
	\begin{lemma}\label{lem-quotient isom}
		Let $(B_i)_{i\in I}$ be a collection of Banach spaces and, for each $i\in I$, let $S_i$ be a closed subspace of $B_i$. Then $\ell^1(B_i)_{i\in I}/\ell^1(S_i)_{i\in I}$ is isometrically isomorphic to $\ell^1(B_i/S_i)_{i\in I}$ under the mapping
		\begin{align*}
			(x_i)_{i\in I} + \ell^1(S_i)_{i\in I} \mapsto ( x_i + S_i)_{i\in I}.
		\end{align*}
	\end{lemma}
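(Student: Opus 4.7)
The plan is to verify that the natural map
\[
\Phi\colon \ell^1(B_i)_{i\in I}/\ell^1(S_i)_{i\in I} \to \ell^1(B_i/S_i)_{i\in I}, \qquad (x_i)_{i\in I}+\ell^1(S_i)_{i\in I}\mapsto (x_i+S_i)_{i\in I}
\]
is a well-defined linear bijection and an isometry, by checking each of these four properties in turn. Linearity is automatic once well-definedness is established, so the only real content is in the norm equality and surjectivity. A preliminary reduction that simplifies matters: since any $(x_i)\in\ell^1(B_i)_{i\in I}$ has countable support, one can, whenever needed, replace $I$ by a countable subset (and re-index by $\N$) in order to sum geometric series of $\epsilon$'s.

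For well-definedness and the contraction bound, I would first note that $\|x_i+S_i\|_{B_i/S_i}\leq \|x_i\|_{B_i}$ gives $\sum_i\|x_i+S_i\|\leq \sum_i\|x_i\|<\infty$, so $\Phi$ maps into $\ell^1(B_i/S_i)_{i\in I}$, and if $(x_i)-(y_i)\in\ell^1(S_i)_{i\in I}$ then coordinatewise $x_i-y_i\in S_i$, so the image is independent of the representative. For the contraction estimate, for any $(s_i)\in\ell^1(S_i)_{i\in I}$,
\[
\|\Phi((x_i)+\ell^1(S_i)_{i\in I})\| = \sum_i\|x_i+S_i\| \leq \sum_i\|x_i-s_i\|,
\]
and taking the infimum over such $(s_i)$ yields $\|\Phi(\,\cdot\,)\|\leq \|\cdot\|$.

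The main obstacle is the reverse inequality, i.e.\ showing $\Phi$ is norm-preserving. The subtlety is that one must lift each coordinatewise $\epsilon$-optimal choice $s_i\in S_i$ (with $\|x_i-s_i\|<\|x_i+S_i\|+\epsilon/2^{i+1}$, enumerating the support of $(x_i)$ by $\N$) into an element of $\ell^1(S_i)_{i\in I}$, not merely into $\prod S_i$. This is ensured by the triangle estimate $\|s_i\|\leq\|x_i\|+\|x_i-s_i\|\leq 2\|x_i\|+\epsilon/2^{i+1}$, which gives $\sum_i\|s_i\|\leq 2\sum_i\|x_i\|+\epsilon<\infty$. Then
\[
\|(x_i)+\ell^1(S_i)_{i\in I}\| \leq \sum_i\|x_i-s_i\| < \sum_i\|x_i+S_i\|+\epsilon = \|\Phi((x_i)+\ell^1(S_i)_{i\in I})\|+\epsilon,
\]
and letting $\epsilon\to 0$ finishes the norm equality (and in particular shows $\Phi$ is injective).

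Finally, for surjectivity, given $(y_i)\in\ell^1(B_i/S_i)_{i\in I}$, I would lift each coordinate individually: for each $i$ with $y_i\neq 0$, pick $x_i\in B_i$ with $x_i+S_i=y_i$ and $\|x_i\|\leq 2\|y_i\|_{B_i/S_i}$ (and set $x_i=0$ otherwise). Then $\sum_i\|x_i\|\leq 2\sum_i\|y_i\|<\infty$, so $(x_i)\in\ell^1(B_i)_{i\in I}$ and $\Phi((x_i)+\ell^1(S_i)_{i\in I})=(y_i)$.
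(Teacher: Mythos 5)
Your proof is correct and follows the natural route that the paper itself merely sketches (the paper appeals to the first isomorphism theorem and leaves the isometry as an exercise); you supply exactly the missing details. In particular, you correctly handle the two genuinely non-trivial points — summably lifting the coordinatewise $\epsilon/2^{i+1}$-optimal choices of $s_i\in S_i$ into $\ell^1(S_i)_{i\in I}$ for the reverse norm inequality, and the factor-2 lift for surjectivity — so nothing further is needed.
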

	The first isomorphism theorem yields that the above mapping is a linear isomorphism, after which it is not difficult to show that it is also an isometry.
	
	Recall that reflexivity is a three-space property (TSP). That is, for a closed subspace $Y$ of a Banach space $X$, one has that $X$ is reflexive if and only if both $Y$ and $X/Y$ are reflexive.
	
	\begin{theorem}
		For all Banach spaces $E$, the space $X_E$ is non-reflexive.
	\end{theorem}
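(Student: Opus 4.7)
The plan is to combine the hereditary property of reflexivity with the three-space property (TSP) to reduce the theorem to a non-reflexivity statement about a quotient of $E_1$, and then to identify that quotient explicitly using Remark~\ref{rem-quotient isometry} and Lemma~\ref{lem-quotient isom}. Since the map $i_1:E_1\to X_E$ from \eqref{ind-lim-i_n} is a linear isometry, $i_1(E_1)$ is a closed subspace of $X_E$ isometrically isomorphic to $E_1$; because reflexivity passes to closed subspaces, it suffices to prove that $E_1$ is non-reflexive. Applying the TSP to the closed subspace $j_0(E_0)\subseteq E_1$ further reduces the task to proving that $E_1/j_0(E_0)$ is non-reflexive.

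Remark~\ref{rem-quotient isometry} applied to the construction $E_1=\tilde{E_0}$ gives that $E_1/j_0(E_0)$ is isometrically isomorphic to $B/S$, where $B=\ell^1(B_\alpha)_{\alpha\in A}$ and $S=\ell^1(S_\alpha)_{\alpha\in A}$, and Lemma~\ref{lem-quotient isom} in turn identifies $B/S$ isometrically with $\ell^1(B_\alpha/S_\alpha)_{\alpha\in A}$. Hence I only need to show this $\ell^1$-sum is non-reflexive, which I would do by constructing an isometric embedding of $\ell^1(\N)$ into it.

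Two easy verifications are required. First, $A$ is infinite whenever $E\neq\{0\}$: for any nonzero $s^*\in S_1^*$ and nonzero $e\in E$, the rank-one operators $\alpha_n(s)=\tfrac{1}{n}s^*(s)e$ for $n\in\Z^+$ are distinct members of $F(S_1,E)\subseteq A$. Second, at least one of the quotients $B_\alpha/S_\alpha$ is nontrivial; in particular $B_1/S_1\neq 0$, since the constant function $1\in L^1(D,\mu)$ does not lie in $S_1=\overline{\text{span}}\{\varepsilon_k\}_{k=0}^\infty$ (each $\varepsilon_k$ has integral zero against $\mu$, and this linear property survives in the $L^1$-closed span). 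Choosing a unit vector $\bar{v}\in B_1/S_1$, the map sending $(c_n)_{n=1}^\infty\in\ell^1(\N)$ to the family whose $\alpha_n$-th coordinate is $c_n\bar{v}$ (and whose other coordinates are zero) is an isometric embedding $\ell^1(\N)\hookrightarrow\ell^1(B_\alpha/S_\alpha)_{\alpha\in A}$, so the target space is non-reflexive. The only substantive step is the identification of $E_1/j_0(E_0)$ with the $\ell^1$-sum via the two cited results; everything else is routine verification of nondegeneracy.
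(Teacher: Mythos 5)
Your proof is correct, and it follows the paper's reduction exactly up to the last step: both arguments pass from $X_E$ to the closed subspace $i_1(E_1)\cong E_1=\tilde{E_0}$, use the three-space property to reduce to $\tilde{E_0}/j_0E_0$, and identify that quotient with $\ell^1(B_\alpha/S_\alpha)_{\alpha\in A}$ via Remark~\ref{rem-quotient isometry} and Lemma~\ref{lem-quotient isom}. Where you diverge is in showing that this $\ell^1$-sum is non-reflexive. The paper isolates a single summand $B_1/S_1$ and argues it is non-reflexive because $B_1=L^1(D,\mu)$ is non-reflexive while $S_1$ is reflexive (being isomorphic to $\ell^2$ by the Khintchine inequalities), invoking the three-space property once more. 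You instead exploit the infinitude of the index set $A$: you only need that \emph{some} quotient $B_\alpha/S_\alpha$ is nonzero (your verification that the constant function lies outside $\overline{\text{span}}\{\varepsilon_k\}$ via the integration functional is fine), and then you embed $\ell^1(\N)$ isometrically by spreading scalar multiples of one nonzero coset across infinitely many coordinates indexed by distinct rank-one operators in $F(S_1,E)$. Your route is more elementary — it avoids Khintchine and does not require knowing that $L^1$ or any individual quotient is non-reflexive — and it is slightly more robust in that it would survive even if every $B_\alpha/S_\alpha$ happened to be reflexive but nonzero. Both arguments share the same implicit nondegeneracy assumption $E\neq\{0\}$ (needed for $A$ to be nonempty at all), which you at least flag explicitly. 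In short: correct, same skeleton, with a cleaner and more self-contained final step.
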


	\begin{proof}
		Without loss, suppose that $E$ is a reflexive Banach space. By Remark~\ref{rem-quotient isometry}, $\tilde{E}/jE$ is isometric to $B/S$, which is isometric to $\ell^1(B_\alpha/S_\alpha)_{\alpha \in A}$ by Lemma~\ref{lem-quotient isom}, which clearly contains $B_1/S_1$ as a closed subspace. It is well-known that the Khintchine inequalities imply that $S_1$ is isomorphic to $\ell^2$. As reflexivity is a TSP, it must be that $B_1/S_1$, and hence $\tilde{E}/jE$, are non-reflexive. Again appealing to reflexivity being a TSP yields that $\tilde{E}$, and hence $X_E$, are non-reflexive.
	\end{proof}
	

	\section{Main Results}\label{sec-results}
	
	\subsection{Extending Projections from $E$ to $\tilde{E}$}
	
	The core idea behind constructing projections on $X_E$ is being able to lift a projection $P$ defined on $E$ to a projection $\tilde{P}$ defined on $\tilde{E}$.
	
	\begin{lemma}\label{lem-proj-ext}
		Let $E$ be a Banach space and $P$ a projection on $E$. Then there is a projection $\tilde{P}$ on $\tilde{E}$ such that $\tilde{P}|_{jE} = jP$ and $\|\tilde{P}\|=\|P\|$.
	\end{lemma}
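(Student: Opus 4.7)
The plan is to exploit the fact that $\tilde E = (B \oplus E)/N$ has a pushout structure: a bounded linear map $\tilde P : \tilde E \to \tilde E$ corresponds bijectively to a pair of bounded linear maps $f : B \to \tilde E$ and $g : E \to \tilde E$ satisfying the compatibility $f|_S = g u$, via the formula $\tilde P(\pi(x,e)) = f(x) + g(e)$. A direct verification shows this correspondence satisfies $\|\tilde P\| = \max(\|f\|, \|g\|)$, using the $\ell^1$-norm on $B \oplus E$ and the fact that $\tilde u$ and $j$ are contractions. Taking $g = jP$ forces $\tilde P|_{jE} = jP$ and $\|g\| = \|P\|$, so the task reduces to producing a bounded linear $f : B \to \tilde E$ with $f|_S = jPu$ and $\|f\| \leq \|P\|$.

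To construct $f$, I would use the structure $B = \ell^1(B_\alpha)_{\alpha \in A}$ together with the canonical maps $\tilde u|_{B_\alpha} : B_\alpha \to \tilde E$ coming from the inclusion of the $\alpha$-th summand. A short calculation gives $\tilde u|_{B_\alpha}(s) = j\alpha(s)/\|\alpha\|$ for $s \in S_\alpha$. The crucial observation is that whenever $\alpha \in F(S_i, E) \subset A$ and $P\alpha \neq 0$, the composition $P\alpha$ is itself a non-zero finite rank operator $S_i \to E$, hence $P\alpha \in A$. Using this I would define, coordinatewise,
\[
f|_{B_\alpha} = \frac{\|P\alpha\|}{\|\alpha\|}\, \tilde u|_{B_{P\alpha}} \ \text{if } P\alpha \neq 0, \qquad f|_{B_\alpha} = 0 \ \text{if } P\alpha = 0,
\]
and assemble these via the universal property of the $\ell^1$-direct sum into a single $f : B \to \tilde E$.

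The verifications are routine. On $s \in S_\alpha$ one computes $f|_{B_\alpha}(s) = (\|P\alpha\|/\|\alpha\|)\cdot jP\alpha(s)/\|P\alpha\| = jP\alpha(s)/\|\alpha\|$, and summing over $\alpha$ gives $f|_S = jPu$. The estimate $\|P\alpha\|/\|\alpha\| \leq \|P\|$ on each coordinate yields $\|f\| \leq \|P\|$ on the $\ell^1$-sum. For idempotence, $\tilde P^2|_{jE} = jP^2 = jP = \tilde P|_{jE}$; and since $P(P\alpha) = P\alpha$, the formula for $f$ at the index $P\alpha$ collapses to $f|_{B_{P\alpha}} = \tilde u|_{B_{P\alpha}}$, so $\tilde P \tilde u|_{B_\alpha} = f|_{B_\alpha}$ is a scalar multiple of $\tilde u|_{B_{P\alpha}}$, which is fixed by $\tilde P$, giving $\tilde P^2 \tilde u|_{B_\alpha} = \tilde P \tilde u|_{B_\alpha}$. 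The norm identity $\|\tilde P\| = \|P\|$ then follows from the pushout norm formula together with the trivial lower bound $\|\tilde P\| \geq \|\tilde P|_{jE}\| = \|P\|$.

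The main obstacle is identifying the correct formula for $f$. Abstractly, extending $jPu : S \to \tilde E$ to a norm-controlled map on $B$ would be hopeless for arbitrary data, but the engineered index set $A$, automatically closed under $\alpha \mapsto P\alpha$ up to zero operators, together with the tautological extensions $\tilde u|_{B_\alpha}$ already baked into $\tilde E$, makes the extension essentially immediate once one thinks of $P\alpha$ as itself being one of the indices used to construct $\tilde E$.
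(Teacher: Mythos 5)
Your proof is correct and is essentially the paper's argument in different packaging: your coordinatewise map $f$ satisfies $f(x)=\pi(Qx,0)$ for the paper's operator $Q$ on $B$ defined by $(Qx)_\alpha=\sum_{\beta:\alpha=P\beta}x_\beta\|P\beta\|/\|\beta\|$, so your $\tilde P$ is the same quotient of $Q\oplus P$. The only difference is that you verify everything downstairs via the universal property of the quotient (compatible pair $(f,g)$ with $f|_S=gu$), whereas the paper works upstairs by checking that $Q\oplus P$ leaves $\{(s,-us):s\in S\}$ invariant.
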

	
	\begin{proof}
		Recalling that $B=\ell^1 (B_\alpha)_{\alpha\in A}$, define $Q:B\to B$ by 
		\begin{align*}
			(Qx)_{\alpha} = \sum_{\beta\,:\, \alpha=P\beta} x_\beta \frac{\|P\beta\|}{\|\beta\|} \qquad \forall \alpha\in A,
		\end{align*}  
		where an empty sum is interpreted to be zero. The definition of $Q$ is worth some elaboration. To obtain the $\alpha$th coordinate of $Qx$, one looks at the indices/operators $\beta\in F(S_\alpha,E)$ such that $\alpha=P\beta$ and adds the corresponding coordinates $x_\beta$ weighted by $\|P\beta\| / \|\beta\|$. Consequently, $(Qx)_\alpha = 0$ unless $\alpha\in PA = \{ P\beta \,:\, \beta\in A \}$. It is now simple to check that $Q$ is linear and $\|Q\|\leq \|P\|$. To show that $Q$ is a projection, first identify $B^*$ with $\ell^\infty(B_\alpha^*)_{\alpha\in A}$, where
		\begin{align*}
			\ell^\infty(B_\alpha^*)_{\alpha\in A} = \left\{ (\psi_\alpha)_{\alpha\in A}\in \prod_{\alpha\in A} B_\alpha^* \,:\, \sup_{\alpha\in A}\|\psi_\alpha\|<\infty \right\}.
		\end{align*}
		Then one sees that $Q^*$, and hence $Q$, are projections since for all $\psi \in \ell^\infty(B_\alpha^*)_{\alpha\in A}$,
		\begin{align}\label{eq-adj-formula}
			(Q^*\psi)_\alpha = \begin{cases}
				  \dfrac{\|P\alpha\|}{\|\alpha\|}\psi_{P\alpha} & \qquad \text{if } P\alpha\neq 0,\\
				0& \qquad \text{if } P\alpha=0.
			\end{cases}
		\end{align}
		
		Now it will be shown that $\{(s,-us)\,:\,s\in S\}$ is an invariant subspace for $Q\oplus P$. If $s\in S$, then
		\begin{align*}
			P us = \sum_{\beta\in A} \frac{P\beta s_\beta}{\|\beta\|} = \sum_{\alpha\in A} \frac{\alpha }{\|\alpha\|} \sum_{\beta\,:\,\alpha=P\beta} s_\beta \frac{\|P \beta\|}{\|\beta\|}  = \sum_{\alpha\in A} \frac{\alpha (Q s)_\alpha}{\|\alpha\|} = u Qs,
		\end{align*}
		and so 
		\begin{align*}
			(Q\oplus P)(s,-us) = (Qs , -uQs) \in \{(s,-us)\,:\,s\in S \}.
		\end{align*}
		Thus the projection $\tilde{P}$ on $\tilde{E}$ is defined to be the corresponding quotient operator of $Q\oplus P$. Lastly, if $e\in E$, then
		\begin{align}\label{eq-proj-ext}
			\tilde{P} j e = \tilde{P}\pi(0,e) = \pi(0,Pe) = jPe.
		\end{align}
		That is, $\tilde{P}|_{jE}=jP$, and so $\|\tilde{P}\|=\|P\|$.
	\end{proof}
	
	By repeatedly using this lemma, the projection $P$ can be lifted to a projection defined on $X_E$.
	
	\begin{lemma}\label{lem-ind-lim-lift}
		Let $E$ be a Banach space and $P:E\to E$ a projection. Then there is a projection $\hat{P}$ on $X_E$ such that $\|\hat{P}\|=\|P\|$ and $\hat{P}i_ne = i_nPe$ for all $n\in\N$ and $e\in E_n$.
	\end{lemma}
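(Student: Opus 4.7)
The plan is to iterate Lemma \ref{lem-proj-ext} along the tower $E_0, E_1, E_2, \dots$ and then pass to the inductive limit by density. Set $P_0 = P$ and, having constructed a projection $P_n$ on $E_n$ with $\|P_n\| = \|P\|$, let $P_{n+1}$ be the projection on $E_{n+1} = \tilde{E_n}$ furnished by Lemma \ref{lem-proj-ext}, so that $P_{n+1} j_n = j_n P_n$ and $\|P_{n+1}\| = \|P_n\| = \|P\|$. This yields a compatible sequence of uniformly bounded projections $\{P_n\}_{n\in\N}$, one on each $E_n$; the statement's notation ``$P$ on $E_n$'' should be understood as $P_n$.

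Next, I would define $\hat{P}$ on the dense subspace $\bigcup_{n\in\N} i_n(E_n)$ of $X_E$ by setting $\hat{P}(i_n e) = i_n P_n e$ for $e \in E_n$. To verify well-definedness, note that the spaces $i_n(E_n)$ are nested since $i_n = i_{n+1} j_n$, so it suffices to consider $n \leq m$ and $i_n e = i_m e'$. Because $i_n = i_m j_{m-1}\cdots j_n$ and $i_m$ is injective, one has $j_{m-1}\cdots j_n e = e'$. Iterating the intertwining relation $P_{k+1} j_k = j_k P_k$ then gives
\begin{align*}
    i_m P_m e' = i_m P_m(j_{m-1}\cdots j_n e) = i_m(j_{m-1}\cdots j_n P_n e) = i_n P_n e,
\end{align*}
so the two prescriptions agree. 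Linearity is immediate, and since each $i_n$ is an isometry with $\|P_n\| = \|P\|$,
\begin{align*}
    \|\hat{P}(i_n e)\| = \|P_n e\| \leq \|P\|\,\|i_n e\|,
\end{align*}
so $\hat{P}$ extends by continuity to a bounded linear operator on $X_E$ with $\|\hat{P}\| \leq \|P\|$.

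Finally, I would verify the projection property and pin down the norm. On the dense subspace, $\hat{P}^2(i_n e) = \hat{P}(i_n P_n e) = i_n P_n^2 e = i_n P_n e$, and continuity upgrades this to $\hat{P}^2 = \hat{P}$ on all of $X_E$. The reverse norm inequality $\|\hat{P}\| \geq \|P\|$ follows from restricting to $i_0(E_0)$, where $\hat{P}$ agrees with $i_0 P i_0^{-1}$ and $i_0$ is an isometry. The only step requiring real care is the well-definedness check, which hinges precisely on the compatibility $P_{n+1} j_n = j_n P_n$ supplied by Lemma \ref{lem-proj-ext}; everything else is routine density-and-continuity bookkeeping.
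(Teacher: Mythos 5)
Your proof is correct and follows essentially the same route as the paper: iterate Lemma~\ref{lem-proj-ext} to obtain compatible projections $P_n$ on each $E_n$, define $\hat{P}$ on the dense union $\bigcup_n i_n(E_n)$ via $\hat{P}i_ne=i_nP_ne$, and extend by continuity. You have in fact spelled out the well-definedness and norm-equality checks more explicitly than the paper does, which is harmless.
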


	\begin{proof}
	By inductively using Lemma~\ref{lem-proj-ext} with $P=P_0$, one can construct for each $n\in\N$ a projection $P_{n+1}$ on $E_{n+1}$ such that $\|P_{n+1}\|=\|P\|$ and
	\begin{align}\label{eq-well-defined}
		P_{n+1}|_{j_nE_n} = j_nP_{n}.
	\end{align} 
	Recalling that the maps $i_n$ from \eqref{ind-lim-i_n} satisfy $i_{n+1}j_n=i_n$ for all $n\in\N$, define $\hat{P}$ on $\bigcup_{n=0}^\infty i_n(E_n)$ by
	\begin{align*}
		\hat{P} i_n e = i_n P_{n} e
	\end{align*}
	whenever $e\in E_n$, which is well-defined by \eqref{eq-well-defined}. Observe that $\hat{P}$ is a projection with $\|\hat{P}\|=\|P\|$, which can then be extended to a projection with the same norm, also denoted by $\hat{P}$, on the inductive limit $X_E$ via density.
	\end{proof}
	
	This construction can be used to lift an increasing sequence of projections on $E$ to one on $\tilde{E}$, and hence also to $X_E$.

	\begin{lemma}\label{lem-proj-ext-seq}
	Suppose that $E$ is a Banach space and $\{P_k\}_{k=1}^\infty$ is an increasing sequence of uniformly bounded projections on $E$ with $\sup\limits_k\|P_k\| = K$. Then there is an increasing sequence of projections $\{\tilde{P}_k\}_{k=1}^\infty$ on $\tilde{E}$ with $\sup\limits_k\|\tilde{P}_k\|= K$. Moreover, $\tilde{P}_k|_{jE} = jP_k$ for all $k\in\Z^+$.
\end{lemma}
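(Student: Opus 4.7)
The plan is to apply Lemma~\ref{lem-proj-ext} individually to each projection $P_k$ to obtain a projection $\tilde{P}_k$ on $\tilde{E}$ with $\tilde{P}_k|_{jE} = jP_k$ and $\|\tilde{P}_k\| = \|P_k\|$. The norm bound $\sup_k \|\tilde{P}_k\| = K$ is then immediate, as is the condition $\tilde{P}_k|_{jE} = jP_k$. The distinctness $\tilde{P}_k \neq \tilde{P}_\ell$ for $k \neq \ell$ follows at once from the restriction property together with $jP_k \neq jP_\ell$ (since $j$ is an isometry and $P_k \neq P_\ell$). The main task is therefore to verify that the sequence is increasing, that is, $\tilde{P}_k\tilde{P}_\ell = \tilde{P}_{\min(k,\ell)}$.

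To this end, recall that in the proof of Lemma~\ref{lem-proj-ext} the projection $\tilde{P}_k$ is the operator on $\tilde{E} = (B\oplus E)/\{(s,-us)\}_{s\in S}$ induced by $Q_k \oplus P_k$, where $Q_k:B\to B$ is defined by
\begin{align*}
(Q_k x)_\alpha = \sum_{\beta\,:\,\alpha = P_k\beta} x_\beta \frac{\|P_k\beta\|}{\|\beta\|} \qquad \forall\alpha \in A.
\end{align*}
Since $(Q_k\oplus P_k)(Q_\ell\oplus P_\ell) = Q_kQ_\ell \oplus P_kP_\ell$ and the hypothesis gives $P_kP_\ell = P_{\min(k,\ell)}$, passing to the quotient reduces everything to checking that $Q_kQ_\ell = Q_{\min(k,\ell)}$ on $B$.

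This I would check by direct computation. Expanding the definitions,
\begin{align*}
(Q_kQ_\ell x)_\alpha = \sum_{\beta\,:\,\alpha=P_k\beta}\,\sum_{\gamma\,:\,\beta=P_\ell\gamma} x_\gamma \frac{\|P_\ell\gamma\|}{\|\gamma\|}\cdot\frac{\|P_k\beta\|}{\|\beta\|},
\end{align*}
and substituting $\beta = P_\ell\gamma$ telescopes the weights to $\|P_kP_\ell\gamma\|/\|\gamma\| = \|P_{\min(k,\ell)}\gamma\|/\|\gamma\|$, while the index condition becomes $\alpha = P_{\min(k,\ell)}\gamma$ (the auxiliary condition $P_\ell\gamma \neq 0$ being automatic, since otherwise $P_{\min(k,\ell)}\gamma = 0 \neq \alpha$). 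Hence $Q_kQ_\ell = Q_{\min(k,\ell)}$, and by symmetry the same holds with $k$ and $\ell$ swapped.

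The only subtle point is bookkeeping the index set in this calculation, in particular confirming that no $\gamma$ with $\alpha = P_{\min(k,\ell)}\gamma$ is accidentally omitted by the intermediate summation. Once this is settled, pushing the identity $Q_kQ_\ell \oplus P_kP_\ell = Q_{\min(k,\ell)} \oplus P_{\min(k,\ell)}$ through the quotient map yields $\tilde{P}_k\tilde{P}_\ell = \tilde{P}_{\min(k,\ell)}$, completing the verification that $\{\tilde{P}_k\}_{k=1}^\infty$ is an increasing sequence of uniformly bounded projections on $\tilde{E}$ with the required properties.
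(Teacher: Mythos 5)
Your proposal is correct and follows essentially the same route as the paper: apply Lemma~\ref{lem-proj-ext} to each $P_k$ and reduce the increasing property to the identity $Q_kQ_\ell = Q_{\min(k,\ell)}$ on $B$ (plus the observation that $\tilde{P}_k\neq\tilde{P}_\ell$ follows from the restriction property). The only difference is that the paper verifies this identity by computing with the adjoints $Q_k^*$ on $\ell^\infty(B_\alpha^*)_{\alpha\in A}$, where each coordinate is a single term rather than a double sum, which sidesteps exactly the index bookkeeping you flag; your direct computation is nonetheless valid, since the pairs $(\beta,\gamma)$ with $\alpha=P_k\beta$ and $\beta=P_\ell\gamma$ are in bijection with $\{\gamma:\alpha=P_{\min(k,\ell)}\gamma\}$.
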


\begin{proof}
	Suppose that $\{P_k\}_{k=1}^\infty$ is an increasing sequence of uniformly bounded projections on a Banach space $E$. For each $k\in \Z^+$, let $\tilde{P}_k$ be the projection obtained from $P_k$ and Lemma~\ref{lem-proj-ext}, so that $\tilde{P}_k$ is the quotient operator induced by $Q_k\oplus P_k$. Then it remains to show that $\{\tilde{P}_k\}_{k=1}^\infty$ is increasing, which would follow from $\{Q_k\}_{k=1}^\infty$ being increasing. From \eqref{eq-adj-formula}, one has that
	\begin{align*}
		(Q_k^*\psi)_\alpha = \begin{cases}
			\dfrac{\|P_k\alpha\|}{\|\alpha\|} \psi_{P_k\alpha}   & \text{ if } P_k\alpha \neq 0,\\
			0 & \text{ if } P_k\alpha =0
		\end{cases}
	\end{align*}
	for each $k\in\Z^+$. Now suppose that $\ell> k$ and $\psi\in B^*$. If $P_k \alpha=0$, then it easily follows that $(Q_k^*Q_\ell^*\psi)_\alpha = (Q_\ell^*Q_k^*\psi)_\alpha=0 = (Q_k^*\psi)_\alpha$. If $P_k\alpha\neq 0$, then
	\begin{align*}
		(Q_{k}^* Q_{\ell}^* \psi)_\alpha = \frac{\|P_{k}\alpha\|}{\|\alpha\|} (Q_{\ell}^*\psi)_{P_{k}\alpha} = \dfrac{\|P_{k}\alpha\|}{\|\alpha\|} \frac{\|P_{\ell}P_{k}\alpha\|}{\|P_{k}\alpha\|} \psi_{P_{\ell}P_{k}\alpha} = \frac{\|P_{k}\alpha\|}{\|\alpha\|} \psi_{P_{k}\alpha} = (Q_{k}^*\psi)_\alpha
	\end{align*}
	and
	\begin{align*}
		(Q_{\ell}^* Q_{k}^* \psi)_\alpha = \frac{\|P_{\ell}\alpha\|}{\|\alpha\|} (Q_{k}^*\psi)_{P_{\ell}\alpha} = \frac{\|P_{\ell}\alpha\|}{\|\alpha\|} \frac{\|P_{k}P_{\ell}\alpha\|}{\|P_{\ell}\alpha\|} \psi_{P_{k}P_{\ell}\alpha} = \frac{\|P_{k}\alpha\|}{\|\alpha\|} \psi_{P_{k}\alpha} = (Q_{k}^*\psi)_\alpha.
	\end{align*}
	Moreover, $Q_\ell\neq Q_k$ since there are operators $\alpha\in A$ for which $P_\ell\alpha\neq0$ but $P_k\alpha=0$. So $\{Q_k^*\}_{k=1}^\infty$, and hence $\{Q_k\}_{k=1}^\infty$, are increasing sequences of uniformly bounded projections. Hence $\{\tilde{P}_k\}_{k=1}^\infty$ is an increasing sequence of uniformly bounded projections on $\tilde{E}$.	
\end{proof}

\begin{theorem}\label{thm-ind-lim-lift}
Let $E$ be a Banach space and $\{P_k\}_{k=1}^\infty$ an increasing sequence of uniformly bounded projections on $E$. Then there is an increasing sequence of uniformly bounded projections $\{\hat{P}_k\}_{k=1}^\infty$ on $X_E$ with $\sup\limits_k\|\hat{P}_k\| = \sup\limits_k\|P_k\|$.
\end{theorem}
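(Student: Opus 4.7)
The plan is to combine Lemmas~\ref{lem-ind-lim-lift} and~\ref{lem-proj-ext-seq}: use the single-step extension result of Lemma~\ref{lem-proj-ext-seq} inductively on the sequence $E_0,E_1,E_2,\dots$ to obtain, for each $n\in\N$, an increasing sequence of uniformly bounded projections on $E_n$, and then define the $\hat P_k$ on $X_E$ exactly as in the proof of Lemma~\ref{lem-ind-lim-lift} by specifying their action on $\bigcup_{n=0}^\infty i_n(E_n)$ and extending by density.

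More precisely, set $P_k^{(0)}=P_k$ and apply Lemma~\ref{lem-proj-ext-seq} inductively to produce, for each $n\in\N$, an increasing sequence $\{P_k^{(n+1)}\}_{k=1}^\infty$ of projections on $E_{n+1}$ satisfying $\sup_k \|P_k^{(n+1)}\| = \sup_k \|P_k^{(n)}\|$ and $P_k^{(n+1)}|_{j_nE_n} = j_n P_k^{(n)}$ for each $k$. By induction, $\|P_k^{(n)}\|=\|P_k\|$ for all $n$, so the bound $K:=\sup_k\|P_k\|$ is preserved at every level. The compatibility relation $P_k^{(n+1)}j_n = j_n P_k^{(n)}$, combined with $i_{n+1}j_n=i_n$ from \eqref{ind-lim-i_n}, means the rule
\begin{align*}
\hat P_k\, i_n e = i_n P_k^{(n)} e \qquad (e\in E_n,\ n\in\N)
\end{align*}
is a well-defined linear map on $\bigcup_{n=0}^\infty i_n(E_n)$ of norm at most $K$, and so extends by density to a bounded operator $\hat P_k$ on $X_E$ with $\|\hat P_k\|\le K$.

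It remains to verify that $\{\hat P_k\}_{k=1}^\infty$ is an increasing sequence. Since each $\hat P_k$ is defined levelwise and each $\{P_k^{(n)}\}_{k=1}^\infty$ is increasing on $E_n$, for any $k,\ell\in\Z^+$, $n\in\N$ and $e\in E_n$ one computes
\begin{align*}
\hat P_k \hat P_\ell\, i_n e = i_n P_k^{(n)} P_\ell^{(n)} e = i_n P_{\min(k,\ell)}^{(n)} e = \hat P_{\min(k,\ell)}\, i_n e,
\end{align*}
and symmetrically for $\hat P_\ell \hat P_k$. As $\bigcup_n i_n(E_n)$ is dense in $X_E$, this gives $\hat P_k\hat P_\ell = \hat P_\ell \hat P_k = \hat P_{\min(k,\ell)}$. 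Distinctness $\hat P_k\neq\hat P_\ell$ for $k\neq\ell$ follows by restricting to the isometric copy $i_0(E)$ of $E$: since $P_k\neq P_\ell$, there is $e\in E$ with $P_k e\neq P_\ell e$, and then $\hat P_k\, i_0 e = i_0 P_k e \neq i_0 P_\ell e = \hat P_\ell\, i_0 e$ because $i_0$ is an isometry.

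Finally, the lower bound $\sup_k\|\hat P_k\|\ge K$ follows from $\|\hat P_k\|\ge \|\hat P_k\, i_0\| = \|i_0 P_k\| = \|P_k\|$ via the isometry $i_0$, giving $\sup_k\|\hat P_k\|=\sup_k\|P_k\|$ as required. The construction is essentially bookkeeping on top of Lemma~\ref{lem-proj-ext-seq}; the only subtle point is the compatibility relation across levels that makes the definition of $\hat P_k$ unambiguous, and this is exactly the content of the ``moreover'' clause of Lemma~\ref{lem-proj-ext-seq}.
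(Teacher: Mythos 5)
Your proposal is correct and follows the same route as the paper: the paper's proof is a one-line appeal to Lemmas~\ref{lem-ind-lim-lift} and~\ref{lem-proj-ext-seq}, and what you have written is precisely the intended unpacking of that appeal (inductive application of Lemma~\ref{lem-proj-ext-seq} along the tower $E_0,E_1,\dots$, then the levelwise gluing and density extension from the proof of Lemma~\ref{lem-ind-lim-lift}). The verifications of compatibility, the increasing property, distinctness via the isometry $i_0$, and the norm equality are all sound.
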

	
\begin{proof}
		Suppose that $\{P_k\}_{k=1}^\infty$ is an increasing sequence of uniformly bounded projections on a Banach space $E$.  Lemmas~\ref{lem-ind-lim-lift} and \ref{lem-proj-ext-seq} yield an increasing sequence of projections $\{\hat{P}_k\}_{k=1}^\infty$ on $X_E$ with $\sup\limits_k\|\hat{P}_k\| = \sup\limits_k\|P_k\|$. 
	\end{proof}

	\subsection{Main Result}
	
	In this section, it will be shown that no matter the Banach space $E$, there is a well-bounded operator on $X_E$ that is not of type (B).

	\begin{lemma}\label{lem-SOT}
		Suppose that $\{P_k\}_{k=1}^\infty$ and $\{\tilde{P}_k\}_{k=1}^\infty$ are as in Lemma~\ref{lem-proj-ext-seq}. Then $\lim\limits_{k\to\infty} \tilde{P}_k$ does not exist in the SOT.
	\end{lemma}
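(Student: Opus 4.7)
The plan is to proceed by contradiction: suppose $\tilde P_k \to \tilde P$ in the SOT on $\tilde E$. Since $\tilde P_k \pi(x,0) = \pi(Q_k x, 0)$ for every $x \in B$, composing the quotient $\tilde E \to \tilde E/jE$ with the isometric identifications $\tilde E/jE \cong B/S$ and $B/S \cong \ell^1(B_\alpha/S_\alpha)_{\alpha\in A}$ from Remark~\ref{rem-quotient isometry} and Lemma~\ref{lem-quotient isom} gives a continuous map $\tilde E \to \ell^1(B_\alpha/S_\alpha)_{\alpha\in A}$ sending $\pi(x,0) \mapsto (x_\alpha + S_\alpha)_{\alpha \in A}$. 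Applied to $\tilde P_k \pi(x,0)$, convergence in $\tilde E$ would force $\{Q_k x + S\}$ to be Cauchy in $\ell^1(B_\alpha/S_\alpha)_{\alpha\in A}$ for every $x \in B$. I shall exhibit a single $x$ for which this fails.

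The distinctness of the $P_k$ together with $P_k P_\ell = P_{\min(k,\ell)}$ forces $\textup{ran}(P_k) \subsetneq \textup{ran}(P_\ell)$ for $k<\ell$, so $\bigcup_k \textup{ran}(P_k)$ is a strictly increasing union of closed subspaces and therefore fails to be closed in $E$ by Baire's theorem. Pick any $v \in \overline{\bigcup_k \textup{ran}(P_k)} \setminus \bigcup_k \textup{ran}(P_k)$; since $\sup_k\|P_k\|<\infty$, a standard approximation argument gives $P_k v \to v$, while $v \notin \textup{ran}(P_k)$ forces $P_k v \neq v$ for every $k$. A pigeonhole argument (any value attained infinitely often would have to equal $v$) then shows $\{P_k v\}$ takes infinitely many distinct values, and $\|P_k v\| \to \|v\| > 0$.

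To build the witness, pick $\phi_0 \in S_1^*$ with $\|\phi_0\|=1$ and set $\alpha(s) = \phi_0(s)\, v$ for $s \in S_1$, so that $\alpha \in F(S_1,E)\subset A$ with $\|\alpha\|=\|v\|$ and $(P_k\alpha)(s) = \phi_0(s)\,P_k v$, giving $\|P_k\alpha\|/\|\alpha\| = \|P_k v\|/\|v\|$ and making the operators $P_k\alpha$ distinct exactly when the $P_k v$ are. Since $S_1 \cong \ell^2$ is a proper subspace of $L^1(D,\mu) = B_1$, fix any $b \in B_1 \setminus S_1$, so $\|b + S_1\|_{B_1/S_1} > 0$, and let $x \in B$ be concentrated at the coordinate $\alpha$ with value $b$. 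The defining formula for $Q_k$ shows that, for each $k$ with $P_k v \neq 0$, $Q_k x + S$ is supported at the single coordinate $P_k \alpha$ in $\ell^1(B_\gamma/S_\gamma)_{\gamma \in A}$ with value $(\|P_k v\|/\|v\|)(b + S_1)$. For any pair $k,\ell$ large enough that $P_k v, P_\ell v$ are nonzero and distinct, the supports of $Q_k x + S$ and $Q_\ell x + S$ are disjoint in the $\ell^1$-sum, so
\begin{align*}
\|(Q_k x - Q_\ell x) + S\|_{B/S} \;=\; \|b + S_1\|_{B_1/S_1}\cdot \frac{\|P_k v\|+\|P_\ell v\|}{\|v\|} \;\longrightarrow\; 2\|b+S_1\|_{B_1/S_1} > 0
\end{align*}
as $k,\ell \to \infty$ along a subsequence of distinct values of $P_k v$. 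Hence $\{Q_k x + S\}$ is not Cauchy in $B/S$, contradicting the reduction in the first paragraph.

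The main obstacle is the second paragraph: producing a single $v \in E$ on which $\{P_k v\}$ fails to stabilise while $\|P_k v\|$ stays uniformly positive. The Baire-category observation, combined with the strict increase of the ranges $\textup{ran}(P_k)$, supplies such a $v$ automatically, and the rest of the argument is a direct norm computation in the $\ell^1$-direct sum structure of $B/S$.
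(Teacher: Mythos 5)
Your proof is correct and follows essentially the same strategy as the paper: reduce to showing that $\{Q_k x + S\}$ fails to be Cauchy in $B/S \cong \ell^1(B_\alpha/S_\alpha)_{\alpha\in A}$ for an $x$ supported at a single rank-one coordinate $\alpha$ whose images $P_k\alpha$ are eventually distinct with norms bounded below, so that the $Q_kx$ have disjoint supports and the $\ell^1$-structure gives a uniform lower bound. The only real difference is in how the witness vector is produced --- the paper constructs it explicitly as $e=\sum_k 2^{-k}e_k$ with $e_k\in P_kE\cap\ker P_{k-1}$, whereas you obtain $v$ non-constructively via Baire category applied to $\bigcup_k\operatorname{ran}(P_k)$ --- and in your cleaner packaging of the final estimate through the quotient isometry $\tilde{E}/jE\cong B/S$ rather than the paper's direct $\inf_{s\in S}$ computation.
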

	
	\begin{proof}
		As $\{P_k\}_{k=1}^\infty$ is an increasing sequence of projections, let $\{e_k\}_{k=1}^\infty$ be a set of unit vectors in $E$ satisfying $e_1\in P_1E$ and $e_k\in P_kE\cap\ker P_{k-1}$ for all $k\geq 2$. Further let  $e=\sum\limits_{k=1}^\infty 2^{-k} e_k\in E$ and $\gamma\in A$ be a rank 1 operator whose range is $\text{span}\{e\}$ with $\|\gamma\|=1$, so that $P_{\ell}\gamma\neq P_k \gamma$ for all $\ell\neq k$. Let $b\in B_\gamma\backslash S_\gamma$ with $d(b,S_\gamma)=1$, and $x=(x_\alpha)_{\alpha\in A} \in B$ be defined by
		\begin{align*}
			x_\alpha = \begin{cases}
				b & \qquad \text{ if } \alpha=\gamma,\\
				0 & \qquad \text{ otherwise}. 
			\end{cases}
		\end{align*}
		We will show that $\{\tilde{P}_k \pi(x,0)\}_{k=1}^\infty$ does not converge. This is essentially because for $\ell\neq k$, $P_\ell\gamma\neq P_k\gamma$ means that $b$ will always occupy a different coordinate in $Q_\ell x$ to $Q_k x$.
				So, if $s\in S$ and $\ell>k$, we have that
				\begin{align*}
					\big\|  (Q_{\ell}\oplus P_\ell - Q_{k}\oplus P_k) (x-s , us)   \big\|  & \geq \| (Q_{\ell}-Q_{k}) (x-s) \| \\
					&	= \sum_{\alpha\in A} \bigg\| \sum_{\beta\,:\, \alpha=P_{\ell}\beta} (x-s)_\beta \frac{\|P_{\ell}\beta\|}{\|\beta\|} - \sum_{\beta\,:\,\alpha=P_{k}\beta} (x-s)_\beta \frac{\|P_{k}\beta\|}{\|\beta\|} \bigg\|.
				\end{align*}
				By considering only the $\alpha=P_{\ell}\gamma$ coordinate, we have that
				\begin{align*}
					\| (Q_{\ell}-Q_{k}) (x-s) \|  &\geq \bigg\|x_\gamma \|P_{\ell}\gamma\| - \sum_{\beta\,:\, P_{\ell}\gamma = P_{\ell}\beta} s_\beta\frac{\|P_{\ell}\beta\|}{\|\beta\|}  + \sum_{\beta\,:\, P_{\ell}\gamma = P_{k}\beta} s_\beta\frac{\|P_{k}\beta\|}{\|\beta\|}  \bigg\|  \\
					&\geq  \|P_{\ell}\gamma\| \, d(b,S_\gamma)\\
					& = \|P_\ell\gamma\|.
				\end{align*}
				Since $P_{n}e \to e$ as $n\to\infty$, there is some $N\in\Z^+$ such that $2\|P_{n}e\| > \|e\|$ whenever $n>N$. Let $s_e\in S_\gamma$ satisfy $\gamma s_e = e$ so that
				\begin{align*}
					\|P_{n}\gamma\| \geq \frac{\| P_{n}\gamma s_e\|}{\|s_e\|} = \frac{\|P_{n }e\|}{\|s_e\|} > \frac{\|e\|}{2\|s_e\|}
				\end{align*} 
				for all $n>N$. Hence
				\begin{align*}
					\| (\tilde{P}_{\ell}  - \tilde{P}_{k})\pi(x,0) \| \geq \inf_{s\in S} \| (Q_{\ell}-Q_{k}) (x-s) \| \geq \frac{\|e\|}{2\|s_e\|}
				\end{align*}
				for all $\ell>k>N$. That is, $\{\tilde{P}_{k} \}_{k=1}^\infty$ does not converge in the SOT.
			\end{proof}
			
			Consequently, whenever a Banach space $E$ has an increasing sequence of uniformly bounded projections, Lemma~\ref{lem-ind-lim-lift} and Theorem~\ref{thm-proj-B} yield a well-bounded operator on $X_E$ not of type (B). Now, it is not known in general whether one can always find such a sequence of projections on an infinite dimensional Banach space $E$, and certainly it is not possible when $E$ is finite dimensional. However, it will be shown that one can always find such a sequence of projections on $\tilde{E}$.

			\begin{lemma}\label{lem-fin-dim}
				If $E$ is finite dimensional, then there is an increasing sequence of uniformly bounded projections on $\tilde{E}$.
			\end{lemma}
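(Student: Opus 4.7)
The plan is to exploit the fact that $jE\subseteq\tilde{E}$ is finite dimensional to split $\tilde{E}$ topologically as $jE\oplus W$ for some closed subspace $W$ isomorphic to the quotient $\tilde{E}/jE$, and then to transport the obvious coordinate projections from the $\ell^1$-sum structure of that quotient back to $\tilde{E}$.

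First I would use that $jE$ is finite dimensional to produce (via a standard Hahn--Banach dual-basis argument) a bounded projection $\Pi:\tilde{E}\to jE$. Setting $W=\ker\Pi$ gives a topological direct sum decomposition $\tilde{E}=jE\oplus W$, and the restriction of the quotient map $\tilde{E}\to\tilde{E}/jE$ to $W$ is an isomorphism $q:W\to\tilde{E}/jE$. Next, Remark~\ref{rem-quotient isometry} identifies $\tilde{E}/jE$ isometrically with $B/S$, and Lemma~\ref{lem-quotient isom} identifies $B/S$ isometrically with $\ell^1(B_\alpha/S_\alpha)_{\alpha\in A}$; composing yields an isomorphism $\sigma:W\to\ell^1(B_\alpha/S_\alpha)_{\alpha\in A}$.

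On such an $\ell^1$-sum, increasing projections are trivial to produce. The index set $A$ is infinite, since whenever $E\neq\{0\}$ there are infinitely many non-zero rank one operators from $S_1$ to $E$; and each factor $B_\alpha/S_\alpha$ is one of the three non-trivial quotients $B_i/S_i$ (for instance $S_1\cong\ell^2$ is a proper subspace of $B_1=L^1(D,\mu)$). Fixing distinct $\alpha_1,\alpha_2,\dots\in A$, define $R_k$ to be the coordinate projection of $\ell^1(B_\alpha/S_\alpha)_{\alpha\in A}$ onto the summands indexed by $\{\alpha_1,\dots,\alpha_k\}$. Then $\|R_k\|\leq 1$, $R_kR_\ell=R_{\min(k,\ell)}$, and $R_k\neq R_\ell$ for $k\neq\ell$. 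Pulling back, set $\tilde{R}_k=\sigma^{-1}R_k\sigma$ on $W$ and define
\begin{align*}
	Q_k = \Pi + \tilde{R}_k(I-\Pi)
\end{align*}
on $\tilde{E}$. A short computation using $\Pi\tilde{R}_k=0$ (since $\tilde{R}_k$ takes values in $W=\ker\Pi$) and $(I-\Pi)|_W=I_W$ then confirms that each $Q_k$ is a projection, that $Q_kQ_\ell=Q_{\min(k,\ell)}$, that $Q_k\neq Q_\ell$ for $k\neq\ell$, and that $\sup_k\|Q_k\|\leq\|\Pi\|+\|\sigma\|\,\|\sigma^{-1}\|\,\|I-\Pi\|<\infty$.

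There is not really a substantial obstacle here: the entire argument rests on the observation that finite dimensionality of $E$ makes $jE$ complemented in $\tilde{E}$, so the $\ell^1$-sum structure of $\tilde{E}/jE$ delivered by Lemma~\ref{lem-quotient isom} is available on a genuine subspace of $\tilde{E}$, and the rest is bookkeeping.
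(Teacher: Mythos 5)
Your proposal is correct and follows essentially the same route as the paper: finite dimensionality of $E$ makes $jE$ complemented in $\tilde{E}$, so the isometric identification of $\tilde{E}/jE$ with $\ell^1(B_\alpha/S_\alpha)_{\alpha\in A}$ (via Remark~\ref{rem-quotient isometry} and Lemma~\ref{lem-quotient isom}) lives on a genuine complement, where finite coordinate projections over the infinite index set $A$ give the increasing uniformly bounded sequence. You have merely written out the bookkeeping that the paper leaves implicit.
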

		
			\begin{proof}
				As $E$ is finite dimensional, $jE$ is complemented in $\tilde{E}$ and so one makes the identification
			\begin{align*}
				\tilde{E} = jE \oplus (\tilde{E}/jE)
			\end{align*}
			as a topological direct sum. Recall that $\tilde{E}/jE$ is isometric to $B/S$, which can be identified isometrically by Lemma~\ref{lem-quotient isom} as
		$ \ell^1 (B_\alpha/S_\alpha)_{\alpha\in A}$.
			As $A$ is infinite, one can easily construct an increasing sequence of uniformly bounded projections on $\tilde{E}/jE$, and hence also on $\tilde{E}$.
			\end{proof}

			For the case of infinite dimensional $E$, one can construct a uniformly bounded sequence of increasing projections on $\tilde{E}$ in the following way. Recall from the construction of $\tilde{E}$ that $B_2=L^1(\T)$ and $S_2=H^1(\T)$. Define the projection $p:B_2\to B_2$, whose range is in $S_2$, by 
			\begin{align*}
				p(f) = \frac{1}{2\pi}\int_0^{2\pi} f(e^{i\theta})\,d\theta.
			\end{align*} 
			That is, $p(f)$ is the constant function whose value is the mean of $f$. Now let $\{e_m\}_{m=1}^\infty$ be a set of linearly independent elements of the unit sphere of $E$. For each $m\in\Z^+$, define $\alpha_m\in F(S_2,E)$ by
			\begin{align*}
				\alpha_m(f) = \left( \frac{1}{2\pi} \int_0^{2\pi}f(e^{i\theta})(1+e^{-i\theta})\,d\theta\right) e_m.
			\end{align*}
			As $\{e_m\}_{m=1}^\infty$ is linearly independent, we have that $\alpha_m p\neq \alpha_n p$ and $\alpha_m\neq\alpha_n$, for all $m\neq n$. Also, for all $m\in\Z^+$, one can easily verify that $\alpha_m\neq\alpha_m p$,  $\|\alpha_m\|\leq 2$ and $\|\alpha_m p\|= 1$.  Recall that $A=\bigcup_{i=1}^3 F(S_i,E)$ and $B=\ell^1(B_\alpha)_{\alpha\in A}$. For each $n\in\Z^+$, define $Q_n:B\to B$ by
			\begin{align*}
				(Q_nx)_\alpha = \begin{cases}
					x_\alpha & \qquad \text{if } \alpha=\alpha_m p \;\text{ for some } 1\leq m\leq n,\\
					- \|\alpha_m\| px_{\alpha_m p} & \qquad \text{if } \alpha = \alpha_m \hspace*{2mm}\; \text{ for some } 1\leq m\leq n,\\
					0 & \qquad \text{otherwise.} 
				\end{cases}
			\end{align*} 
			
			Elaborating upon this definition, for each $1\leq m\leq n$, the map $Q_n$ preserves the $(\alpha_mp)$th coordinate, replaces the $(\alpha_m)$th coordinate by $-\|\alpha_m\|px_{\alpha_mp}$, and deletes all the other coordinates.
			
			\begin{lemma}\label{lem-Bprojections}
				$\{Q_n\}_{n=1}^\infty$ is an increasing sequence of uniformly bounded projections on $B$.
			\end{lemma}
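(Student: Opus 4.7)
The plan is to verify the four properties defining an increasing sequence of uniformly bounded projections by direct coordinate-wise computation: (i) each $Q_n$ is idempotent, (ii) $\sup_n\|Q_n\|<\infty$, (iii) $Q_mQ_n=Q_nQ_m=Q_{\min(m,n)}$ for all $m,n\in\Z^+$, and (iv) $Q_m\neq Q_n$ for $m\neq n$. First I would note that the three cases in the definition of $Q_n$ are mutually exclusive: $\alpha_m\neq\alpha_mp$ and the distinctness of the $\alpha_m$'s and of the $\alpha_mp$'s across $m$ are stated in the paragraph preceding the lemma, while no $\alpha_m$ coincides with any $\alpha_{m'}p$ because the former involves the Fourier coefficient $\hat{f}(1)$ whereas the latter depends only on $\hat{f}(0)$. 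This means each formula unambiguously assigns a value at every coordinate.

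For idempotence, applying $Q_n$ twice and reading off coordinates: the $\alpha_mp$-coordinate reproduces $x_{\alpha_mp}$, while at the $\alpha_m$-coordinate the second application yields $-\|\alpha_m\|\,p(Q_nx)_{\alpha_mp}=-\|\alpha_m\|\,px_{\alpha_mp}$, which already matches $(Q_nx)_{\alpha_m}$. All other coordinates are zero on both sides, and $p^2=p$ is not needed. For uniform boundedness, summing the nonzero contributions of $Q_nx$ gives $\|Q_nx\|\leq\sum_{m=1}^n\bigl(\|x_{\alpha_mp}\|+\|\alpha_m\|\,\|p\|\,\|x_{\alpha_mp}\|\bigr)\leq 3\|x\|$, using $\|\alpha_m\|\leq 2$ and $\|p\|\leq 1$.

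For the increasing relation I would fix $m\leq n$ and check the compositions coordinate-wise. The key observation is that the only indices at which $Q_mQ_nx$ or $Q_nQ_mx$ can be nonzero are among $\{\alpha_kp,\alpha_k: 1\leq k\leq n\}$, coming from the outer projection; at each such index the inner projection's $\alpha_kp$-coordinate (which is what the formula for $Q_n$ or $Q_m$ at $\alpha_k$ actually reads) is preserved exactly when $k\leq\min(m,n)$ and is zero otherwise, matching precisely the support of $Q_m$. A routine case split at the three kinds of indices then gives $Q_mQ_n=Q_nQ_m=Q_m$. For (iv), given $m<n$, choose $x\in B$ supported only at the index $\alpha_np$; then $(Q_nx)_{\alpha_np}=x_{\alpha_np}\neq 0$ while $(Q_mx)_{\alpha_np}=0$.

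The whole argument is essentially book-keeping, and I do not anticipate a real obstacle. The one subtlety that warrants care is in (iii): one must track that the formula defining $Q_n$ at the $\alpha_k$-coordinate reaches back into the $\alpha_kp$-coordinate of its argument, so that the compatibility of $Q_m$ and $Q_n$ ultimately rests on the inclusion $\{1,\dots,m\}\subset\{1,\dots,n\}$ being respected on both families $\{\alpha_k\}$ and $\{\alpha_kp\}$ simultaneously.
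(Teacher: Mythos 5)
Your proof is correct and follows essentially the same route as the paper's: a direct coordinate-wise verification of uniform boundedness and of $Q_mQ_n=Q_nQ_m=Q_{\min(m,n)}$, exploiting that the $\alpha_k$-coordinate of $Q_nx$ reads only the $\alpha_kp$-coordinate of $x$, which is preserved by the other projection. Your additional observation that no $\alpha_m$ can equal any $\alpha_{m'}p$ (via the $\hat{f}(1)$ dependence) is a worthwhile check of well-definedness that the paper leaves implicit.
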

			
			\begin{proof}
				Clearly each $Q_n$ is linear and $\sup\limits_n\|Q_n\| \leq 3$. Now, for $k\geq n$ and $\alpha\in A\backslash\{\alpha_m\}_{m=0}^n$, one has that $(Q_n Q_k x)_\alpha = (Q_k Q_n x)_\alpha = (Q_n x)_\alpha$ as these are all of the coordinates of $x$ that are unchanged or deleted. Otherwise, if $\alpha=\alpha_m$ for some $1\leq m\leq n$, we have that
				\begin{align*}
					(Q_kQ_nx)_{\alpha_m} & = -\|\alpha_m\|p(Q_nx)_{\alpha_m p} = -\|\alpha_m\|p x_{\alpha_m p}	= (Q_nx)_{\alpha_m}
				\end{align*}
				and
				\begin{align*}
		(Q_nQ_kx)_{\alpha_m} & =  -\|\alpha_m\|p(Q_kx)_{\alpha_m p} = -\|\alpha_m\|p x_{\alpha_m p} = (Q_nx)_{\alpha_m}.
				\end{align*}
				Thus we have that $Q_kQ_n=Q_nQ_k=Q_n$ whenever $k\geq n$. Moreover, $Q_k\neq Q_n$ for all $k\neq n$, and so $\{Q_n\}_{n=1}^\infty$ is an increasing sequence of uniformly bounded projections.
		
			\end{proof}
		
			\begin{remark}
				One can construct $\{Q_n\}_{n=1}^\infty$ more generally with any operator $p\in B(X_i,S_i)$ and collection of operators $\{\alpha_m\}_{m=1}^\infty\subset F(S_i,E)$ satisfying $\sup\limits_m\|\alpha_m\|<\infty$, $\alpha_m p\neq \alpha_m $ for all $m\in\Z^+$, and $\alpha_m p\neq\alpha_n p$ for all $m\neq n$. 
			\end{remark}

			The projections in Lemma~\ref{lem-Bprojections} may now be used to defined an increasing sequence of uniformly bounded projections on $\tilde{E}$.
			
			\begin{lemma}\label{lem-inf-dim}
				If $E$ is an infinite dimensional Banach space, then there is an increasing sequence of uniformly bounded projections $\{P_n\}_{n=1}^\infty$ on $\tilde{E}$.
			\end{lemma}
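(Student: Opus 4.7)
The plan is to take the projections $\{Q_n\}_{n=1}^\infty$ on $B$ from Lemma~\ref{lem-Bprojections} and lift them to projections on $\tilde{E}$ by considering the operators $T_n = Q_n \oplus 0$ on $B \oplus E$ and passing to the quotient. For each $T_n$ to descend to a well-defined operator $P_n$ on $\tilde{E} = (B \oplus E)/N$, where $N = \{(s, -us) : s \in S\}$, I need $T_n$ to leave $N$ invariant. Since $T_n(s, -us) = (Q_n s, 0)$, this reduces to showing that $Q_n s \in S$ and $u Q_n s = 0$ for every $s \in S$.

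The condition $Q_n s \in S$ follows directly from the form of $Q_n$: at the coordinate $\alpha_m p$ the output is $s_{\alpha_m p} \in S_{\alpha_m p} = S_2$, and at the coordinate $\alpha_m$ the output is $-\|\alpha_m\| p s_{\alpha_m p}$, which also lies in $S_2 = S_{\alpha_m}$ because $p$ maps into $S_2$. The key calculation is that $u Q_n s = 0$: expanding via the definition of $u$, the contribution from the $\alpha_m p$ coordinates is $\sum_{m=1}^n (\alpha_m p) s_{\alpha_m p}/\|\alpha_m p\|$, while the contribution from the $\alpha_m$ coordinates is $-\sum_{m=1}^n \alpha_m p s_{\alpha_m p}$, and since $\|\alpha_m p\| = 1$ these cancel exactly. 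This cancellation is essentially the design principle behind the coefficient $-\|\alpha_m\|$ appearing in the $\alpha_m$-coordinate of the definition of $Q_n$.

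Once the invariance is established, $P_n$ is well-defined, is a projection (as $T_n^2 = T_n$), and satisfies $\|P_n\| \le \|T_n\| = \|Q_n\| \le 3$. The composition relations $Q_n Q_k = Q_{\min(n,k)}$ transfer immediately to $P_n P_k = P_{\min(n,k)}$. To verify that the $P_n$ are pairwise distinct on $\tilde{E}$ (and not just distinct as operators on $B$), for $k > n$ I would pick any $m$ with $n < m \le k$ and any $b \in B_2 \setminus S_2$, and let $x \in B$ have $b$ in the $\alpha_m p$ coordinate and zero elsewhere. Then $Q_n x = 0$, while the $\alpha_m p$ coordinate of $Q_k x$ equals $b \notin S_{\alpha_m p}$, so $Q_k x \notin S$, which forces $\pi(Q_k x, 0) \neq 0$ in $\tilde{E}$ and hence $P_k \pi(x,0) \neq P_n \pi(x,0)$.

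The only real obstacle in this plan is the cancellation identity $u Q_n s = 0$; everything else is either an immediate transfer of properties from Lemma~\ref{lem-Bprojections} or a short verification on the quotient.
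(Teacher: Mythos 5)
Your proposal is correct and follows essentially the same route as the paper: pass $Q_n\oplus 0$ to the quotient after verifying that $\{(s,-us):s\in S\}$ is invariant, which reduces to the cancellation $uQ_ns=0$ coming from $\|\alpha_m p\|=1$ and the coefficient $-\|\alpha_m\|$. Your explicit check that the quotient projections remain pairwise distinct (via $b\in B_2\setminus S_2$ placed in the $\alpha_m p$ coordinate) is a detail the paper asserts without proof, and it is a correct and worthwhile addition.
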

			
			\begin{proof}
				Let $\{Q_n\}_{n=1}^\infty$ be as in Lemma~\ref{lem-Bprojections} and consider the increasing sequence of uniformly bounded projections $\{Q_n\oplus 0\}_{n=1}^\infty$ on $B\oplus E$. It will be shown that $\{(s,-us)\}_{s\in S}$ is invariant under $Q_n\oplus 0$ for each $n\in\Z^+$. 
				Recalling that $\|\alpha_m p\| = 1$ for all $m\in\N$, we have that
				\begin{align*}
					u Q_n s & = \sum_{\alpha\in A} \frac{\alpha (Q_ns)_\alpha}{\|\alpha\|} = \sum_{m=0}^n \frac{\alpha_m p s_{\alpha_m p}}{\|\alpha_m p\|} - \sum_{m=0}^n \frac{\alpha_m (\|\alpha_m\|ps_{\alpha_m p} ) }{\|\alpha_m\|} = 0,
				\end{align*}
				and so $(Q_n\oplus 0)(s,-us) = (Q_ns,0) = (Q_ns, -uQ_ns)$ for all $s\in S$. Thus setting $P_n$ to be the quotient of $Q_n$ for each $n\in\Z^+$, which will satisfy $P_n\neq P_m$ for all $n\neq m$, defines an increasing sequence of uniformly bounded projections $\{P_n\}_{n=1}^\infty$ on $\tilde{E}$ . 
			\end{proof}
		
			\begin{corollary}\label{cor-inc-proj}
				For any Banach space $E$, there is an increasing sequence of uniformly bounded projections on $X_E$.
			\end{corollary}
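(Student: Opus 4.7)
The proof splits on the dimension of $E$. If $E$ is finite dimensional, Lemma~\ref{lem-fin-dim} yields an increasing sequence $\{P_k\}_{k=1}^\infty$ of uniformly bounded projections on $\tilde{E} = E_1$; if $E$ is infinite dimensional, Lemma~\ref{lem-inf-dim} yields one. Either way the task reduces to lifting this sequence from $E_1$ all the way up to the inductive limit $X_E$.

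To perform the lift, the plan is to iterate Lemma~\ref{lem-proj-ext-seq} along the tower $E_1 \hookrightarrow E_2 \hookrightarrow \cdots$: set $P_k^{(1)} = P_k$, and having produced an increasing sequence of uniformly bounded projections $\{P_k^{(n)}\}_{k=1}^\infty$ on $E_n$ with common bound $K = \sup_k\|P_k\|$, apply Lemma~\ref{lem-proj-ext-seq} at stage $n$ to obtain $\{P_k^{(n+1)}\}_{k=1}^\infty$ on $E_{n+1} = \tilde{E_n}$ with the same bound and with $P_k^{(n+1)}|_{j_n E_n} = j_n P_k^{(n)}$. Following the template of Lemma~\ref{lem-ind-lim-lift}, I would then define $\hat{P}_k$ on $\bigcup_{n \geq 1} i_n(E_n) \subseteq X_E$ by $\hat{P}_k i_n e = i_n P_k^{(n)} e$ for $e \in E_n$; the compatibility condition above together with the identity $i_{n+1} j_n = i_n$ makes this well-defined. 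Since $i_0(E_0) = i_1(j_0 E_0) \subseteq i_1(E_1)$, the union $\bigcup_{n \geq 1} i_n(E_n)$ is still dense in $X_E$, so $\hat{P}_k$ extends by continuity to a bounded operator on $X_E$ with $\|\hat{P}_k\| = K$.

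It remains to verify that $\{\hat{P}_k\}_{k=1}^\infty$ is increasing. The identities $P_k^{(n)} P_\ell^{(n)} = P_{\min(k,\ell)}^{(n)}$ on each $E_n$ pass through the defining formula to give $\hat{P}_k \hat{P}_\ell = \hat{P}_{\min(k,\ell)}$ on the dense subspace, hence everywhere by continuity; distinctness $\hat{P}_k \neq \hat{P}_\ell$ for $k \neq \ell$ is inherited from $P_k^{(1)} \neq P_\ell^{(1)}$ on $E_1$ via the isometric embedding $i_1 : E_1 \hookrightarrow X_E$. There is no real obstacle in the argument, only the minor bookkeeping point that the sequence starts at level $E_1$ rather than $E_0$, which is absorbed by the inclusion $i_0(E_0) \subseteq i_1(E_1)$; the alternative of observing that $X_{\tilde{E}}$ is isometrically isomorphic to $X_E$ and invoking Theorem~\ref{thm-ind-lim-lift} directly works equally well.
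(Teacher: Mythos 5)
Your proposal is correct and follows essentially the same route as the paper: obtain the increasing sequence on $\tilde{E}$ from Lemma~\ref{lem-fin-dim} or Lemma~\ref{lem-inf-dim} according to the dimension of $E$, then lift it along the tower to $X_E$. The paper simply invokes $X_E = X_{\tilde{E}}$ together with the already-established lifting machinery rather than re-running the induction from level $E_1$ by hand, a shortcut you yourself note at the end.
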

		
			\begin{proof}
				Suppose that $E$ is a Banach space. By Lemmas~\ref{lem-fin-dim} and \ref{lem-inf-dim}, there is a uniformly bounded sequence of increasing projections $\{P_n\}_{n=1}^\infty$ on $\tilde{E}$. As $X_E=X_{\tilde{E}}$, Lemma~\ref{lem-ind-lim-lift} yields a uniformly bounded sequence of increasing projections on $X_E$.
			\end{proof}
			
			The main result can now be easily concluded.
			
			\begin{theorem}
				Let $E$ be a Banach space. Then there is a well-bounded operator on $X_E$ that is not of type (B).
			\end{theorem}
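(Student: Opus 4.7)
The plan is to apply Theorem~\ref{thm-proj-B}, which will produce a non type (B) well-bounded operator from any increasing sequence of uniformly bounded projections on $X_E$ that fails to converge in the SOT. All the heavy lifting has already been done in the previous lemmas; what remains is to package them together while carefully preserving the non-convergence established in Lemma~\ref{lem-SOT}. Note that Corollary~\ref{cor-inc-proj} by itself is not enough, because it only guarantees the existence of such a sequence without any information about its SOT behaviour.

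Concretely, I would first obtain an increasing sequence of uniformly bounded projections $\{P_k\}_{k=1}^\infty$ on $\tilde{E}=E_1$, using Lemma~\ref{lem-fin-dim} in the finite dimensional case and Lemma~\ref{lem-inf-dim} in the infinite dimensional case. Treating $\tilde{E}$ as the base space, Lemma~\ref{lem-proj-ext-seq} then produces an increasing sequence of uniformly bounded projections $\{\tilde{P}_k\}_{k=1}^\infty$ on $E_2$, and Lemma~\ref{lem-SOT} guarantees that $\{\tilde{P}_k\}_{k=1}^\infty$ does not converge in the SOT on $E_2$.

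Identifying $X_E$ with $X_{E_2}$ (since the inductive limit depends only on the tail of the defining sequence, an identification already used in Corollary~\ref{cor-inc-proj}), I would then lift $\{\tilde{P}_k\}_{k=1}^\infty$ to $X_E$ by iterating Lemma~\ref{lem-proj-ext-seq} up the tower of spaces $E_2,E_3,\dots$ and extending by density, exactly as in the proof of Lemma~\ref{lem-ind-lim-lift}. This yields an increasing sequence $\{\hat{P}_k\}_{k=1}^\infty$ of uniformly bounded projections on $X_E$ satisfying $\hat{P}_k i_2 e = i_2 \tilde{P}_k e$ for every $e\in E_2$. Since $i_2$ is an isometry and $\{\tilde{P}_k e\}_{k=1}^\infty$ fails to be Cauchy for some $e\in E_2$, the same is true of $\{\hat{P}_k i_2 e\}_{k=1}^\infty$ in $X_E$, so $\{\hat{P}_k\}_{k=1}^\infty$ does not converge in the SOT on $X_E$.

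Finally, I would apply Theorem~\ref{thm-proj-B} with any strictly increasing sequence of real numbers $\{\lambda_n\}_{n=1}^\infty$ converging to a limit $L$ (for instance $\lambda_n = 1 - 2^{-n}$). The resulting operator
\begin{align*}
T = I - \sum_{k=1}^\infty (\lambda_{k+1}-\lambda_k)\, \hat{P}_k
\end{align*}
on $X_E$ is then well-bounded and, by the second half of Theorem~\ref{thm-proj-B}, is not of type (B). The only mild obstacle is the bookkeeping to justify $X_E = X_{E_2}$ and to propagate non-convergence from the level $E_2$ through the density extension defining $\hat{P}_k$; both are inherent to the inductive limit construction, so no genuinely new analysis is required.
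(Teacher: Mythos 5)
Your proposal is correct and follows essentially the same route as the paper: obtain an increasing uniformly bounded sequence on $\tilde{E}$ via Lemma~\ref{lem-fin-dim} or \ref{lem-inf-dim}, lift it up the tower using Lemma~\ref{lem-proj-ext-seq} and the identification $X_E=X_{\tilde{E}}$, invoke Lemma~\ref{lem-SOT} for SOT non-convergence, and finish with Theorem~\ref{thm-proj-B}. If anything, you are more careful than the paper's own (very terse) proof in spelling out why the non-convergence established at the level of $E_2$ in Lemma~\ref{lem-SOT} persists for the lifted projections on $X_E$, namely because $\hat{P}_k i_2 = i_2\tilde{P}_k$ with $i_2$ an isometry.
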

			
			\begin{proof}
				By Corollary~\ref{cor-inc-proj}, there is a uniformly bounded sequence of increasing projections $\{P_n\}_{n=1}^\infty$ on $X_E$ for any Banach space $E$. By Lemma~\ref{lem-SOT}, $\lim\limits_{n\to\infty}P_n$ does not exist in the SOT, and hence the well-bounded operator on $X_E$ obtained via Theorem~\ref{thm-proj-B} is not of type (B).
			\end{proof}

\section*{Acknowledgements}
The work of the author was supported by the Research Training Program of the Department of Education and Training of the Australian Government. The author would also like to thank Ian Doust for checking the manuscript.

\end{document}